\newtheorem{remark}{Remark}[section]
\newtheorem{theorem}{Theorem}[section]
\newtheorem{lema}{Lemma}[section]
\newtheorem{hypothesis}[]{H\hspace{-0.1cm}}
\journal{}
\begin{document}

\begin{frontmatter}


\title{A numerical study of third--order equation with time--dependent coefficients: KdVB equation}


\author[label1]{Cristhian Montoya}
\author[label2]{Carlos Spa}

\address[label1]{School of Applied Sciences and Engineering, Universidad EAFIT.  Medell\'{i}n, Colombia.}
\address[label2]{CASE department in the Barcelona Supercomputing Center. Plaza Eusebi G\"{u}ell n3. Barcelona. Spain}

\begin{abstract}
In this article we present a numerical analysis for a third--order differential equation with
	non--periodic boundary conditions and time--dependent coefficients, namely, the linear Korteweg--de Vries Burgers 
	equation. This numerical analysis is motived due to the dispersive and dissipative phenomena that government this kind 
	of equations. This work builds on previous methods for dispersive equations with constant coefficients, expanding
	the field to include a new class of equations which until now have eluded the time--evolving parameters.  
	More precisely, throughout the Legendre--Petrov--Galerkin method  we prove stability and convergence results of the 
	approximation in appropriate weighted Sobolev spaces. These results allow to show the role and trade off 
	of these temporal parameters into the model.  Afterwards, we numerically investigate the dispersion--dissipation 
	relation for several profiles, further provide insights into the implementation method, which allow to exhibit the accuracy 
	and efficiency of our numerical algorithms.
\end{abstract}

\begin{keyword}
Korteweg--de Vries Burgers equation, Legendre–Petrov–Galerkin method, stability analysis, convergence, 
time--dependent coefficients



\end{keyword}

\end{frontmatter}


\section{\normalsize Introduction} 

\quad

Since the Orszag's pioneer works in the early seventies \cite{1977Orszag},  several numerical spectral methods for 
	solving initial value problems of partial differential equations (PDEs) have become increasingly popular in recent years, 
	specially those associated to spectral Galerkin approximations. As it is well known, the spectral methods involve 
	representation the solutions as a truncated series of known functions of the independent variables. In addition, 
	due to the high--order accuracy whenever they work, these methods are preferable in numerical solutions of PDEs.
	In that framework, Jacobi polynomials \cite{bookGabor} have been used in a variety of applications due to their 
	ability to approximate general classes of functions as well as to its orthogonality properties; for instance, in resolution 
	of the Gibbs' phenomenon \cite{1997David}, transverse vibrations in beams and plates \cite{2007Caruntu}, 
	electrocardiogram data compresion \cite{philips1992data,tchiotsop2007ecg},
	and solution to even--order differential equations subject to various boundary conditions 
	\cite{2009Doha1,2009Doha3,2009Doha2}.  
	
	For the class of odd--order differential equations, which only includes a dispersive process, such as the 
	Korteweg--de Vries (KdV) equation  \cite{korteweg1895xli}, it is well known that periodic boundary conditions onto 
	those models allow for example to apply the Fourier spectral method for obtaining stability results and error estimates,
	 see for instance, Fornberg and Whitham \cite{1978Fornberg}, Fenton and Rienecker \cite{1982Fenton}, Maday and 
	 Quarteroni \cite{1988Quarteroni}, Deng and Ma \cite{2009DengMa}, and references therein.
	 Nevertheless, by considering a bounded domain with non--periodic boundary conditions on those models, 
	 polynomial spectral schemes based in Jacobi polynomials have been successfully used for their numerical 
	 approximations in space \cite{2012Bhrawy, 2000Maetal, 2001Maetal, 2000Li,2007Goubet,1994ShenJie,2010Korkmaz}.
	A common characteristic in the previous papers is the Petrov--Galerkin formulation, which appears due to the lack 
	of symmetry of the main operator and simultaneously incorporates  the boundary conditions inside the polynomial bases. 
	This fact allow to integrate by parts freely in space omitting any  additional boundary terms. In addition, 
	it is worth mentioning the hybrid method proposed by Ma and Sun \cite{2000Maetal}, where the linear part of the KdV equation
	was treated by a Legendre--Petrov--Galerkin (LPG) method, and the nonlinear term was treated using a Chebyshev--collocation 
	method. Moreover, they showed an estimate of order $N^{-r},\, r\geq 2$ in $L^2$--norm (i.e., $N$ is the number of modes) 
	for the linear KdV equation when the solutions satisfy a suitable regularity. Indeed, \cite{2000Maetal} constitutes a starting 
	point in our analysis.

	 By incorporating  a second--order term into the model, it might add a dissipative phenomenon such as occurs
	 in Burgers--type equations \cite{burgers1940application}. The resultant equation is the so--called 
	 Korteweg--de Vries--Burgers (KdVB) equation. In \cite{2003ShenJie}, Shen first introduced a dual LPG method 
	 for the KdVB equation, where the innovation lies in the choice of both trial and test spaces, which form a sequence of 
	 orthogonal polynomials in weighted Sobolev spaces. This fact allow to establish optimal error estimates in appropriate
	 Sobolev spaces. More precisely, in that paper, for the linear KdVB equation with constant coefficients, the author   
	 proven a rate of convergence of order $(1+|\beta|N)N^{1-r},\, r\geq 1$, where $\beta\in (-\frac{1}{3},\frac{1}{6})$ is 
	 the  coefficient associated to the  second--order term (in some cases it can represent the viscosity constant).
	 Afterwards, Yuan et al. \cite{2010Yuan} extended the method proposed by Shen to fifth--order KdV--type 
	 equations. On the other hand, by using Jacobi polynomials, Doha et al. \cite{2011Doha} proposed numerical schemes
	 for solving  both third and fifth--order differential equations with space--dependent coefficients, although no 
	 theoretical result have been provided.  
	 
	 Respect to the time discretization,  a classical Crank--Nicholson--leap--frog scheme 
	 describes a good convergence property for the nonlinear cases in many of the above papers, 
	 meanwhile, a forward Euler scheme is enough for the linear model. However, it should be pointed out that,
	 in all previous approaches, they have obtained either numerical or theoretical results  assuming always
	 that  both the dispersion and dissipation parameters are time--independent coefficients. 
	 
	In general aspects,  the ability to explicitly express the time--parametric dependence of coefficients in 
	a dynamic system is necessary for accurate and quantitive characterization of partial differential equations.
	In practice this is an important innovation since the parameters of physical systems often vary during the measurement 
	process, so that the parametric dependencies may be disambiguated from the model itself. Motivated by this, 
	our work builds on previous methods for dispersive equations with constant coefficients, expanding the field to include a new 
	class of equations which until now have eluded the time--evolving parameters. In fact, in contrast to the previous works, 
	in this paper we consider a third--oder equation with time--dependent coefficients, 
	namely, the KdVB equation with non--periodic boundary conditions. To the best of our knowledge, in the literature no rigorous 
	analysis of stability and 
	convergence of a numerical scheme exists with time--dependent coefficients for this model. The inclusion of dispersive and 
	dissipative parameters with temporal dependence makes  a more careful treatment of the numerical approximation, giving 
	now a first theoretical answer to the relation between them, as well as indicating appropriate ranges where the solutions
	can be tested in order to obtain desired rate of error. Thus our method takes as starting point the framework 
	proposed in \cite{2000Maetal}, in this way, the first goal of this paper is to construct the LPG scheme for 
	the KdVB equation with time varying coefficients. At this point, it is worth mentioning that under this setting, one main advantage
	is the use of   few number of modes for obtaining good numerical simulations. Additionally, our error analysis show that
	the convergence rate is suboptimal respect to diffusion parameters, while, for dispersion parameters, the convergence is optimal. 
	 It is consistent with the results above mentioned and studied in \cite{2000Maetal,2003ShenJie} for 
	constant coefficients. Nevertheless, due to the parametric dependency, our estimates are carried out by considering 
	an appropriate relation among the variable time coefficients, which in turn shows a strong correlation between 
	those coefficients and the LPG method.  
		 
	 The remainder of this paper is organized as follows. In Section \ref{section.teorica}, we develop a fully discrete approximation 
	 for the KdVB equation with time--varying coefficients, further, we prove stability and convergence estimates in weighted Sobolev 
	 spaces. In Section \ref{section.numerica}, we numerically investigate the dispersion--dissipation relation for several profiles, and 
	 providing  insights into the implementation method.  Finally, in the last section, we present the conclusions and  outlooks.
\section{Method and theoretical results}\label{section.teorica}
	In this section we study the numerical approximation for third--order differential
	equations with non--periodic boundary conditions, which in turn include time--dependent coefficients. 
	Specifically, we consider equations of the form  
	\begin{equation}\label{sys.kdvb.linear}
	\left\{
    \begin{array}{llll}
    	\partial_t u+\alpha(t)\partial_x^3u-\beta(t)\partial_x^2u=f(x,t)&\mbox{in} &(-1,1)\times(0,T),\\
  		u(-1,t)=u(1,t)=\partial_x u(1,t)=0&\mbox{in} &(0,T),\\
		u(\cdot, 0)=u_0(\cdot)&\mbox{in}&(-1,1),
    \end{array}\right.
	\end{equation}
	where $u=u(x,t)$ is the state variable in $(-1,1)\times(0,T)$, $f=f(x,t)$ is an external force acting in the system, and 
	$u_0$ is the initial datum. System \eqref{sys.kdvb.linear} represents the so--called linear 
	Korteweg de Vries--Burgers (KdVB) equation. In 1895, Korteweg and de Vries developed an evolutionary model to 
	describe the propagation of long water waves  in channels of shallow depth, namely, KdV equation, in which two phenomena 
	are involved, dispersion (third-order derivative) and nonlinear convection (nonlinear term). The interaction of these 
	terms gives rise to a wave traveling at constant speed without losing its sharp, called 
	soliton \cite{bona1985travelling,feng2007traveling,korteweg1895xli}. As consequence of the union of the 
	KdV and Burgers equations arise the KdVB equation, which in our case has homogeneous non--periodic boundary 
	conditions. 
	
	From a numerical point of view, KdVB--type equations with constant coefficients have been widely studied by
	means of different methods; see for instance the works 	
	 \cite{2020QinMa, 2019WangPengzhan, 2018Fangetal, 2010Korkmaz,2000Maetal, 2001Maetal} and references therein
	 for more details. 
	 
	 On the other hand, by introducing variable coefficients
	$\alpha(t)>0$ and $\beta(t)>0$, the KdVB equation \eqref{sys.kdvb.linear} is useful to describe solitonic propagation 
	in fluids \cite{yu2011solitonic}, a variety of cosmic plasma phenomena 
	\cite{gao2015variety, 2019Liumulti,2014Triki, 2019Gao}, among others. Motivated by those applications and as mentioned, 
	as far as we know, an exhaustive numerical study for \eqref{sys.kdvb.linear} has not been reported. Therefore, 
	our article fill this gap giving theoretical--numerical answers to the relation between them, as well as indicating 
	appropriate ranges where the solutions can be tested. 

\subsection{Legendre--Petrov--Galerkin method.}		
	In this subsection we formulate a fully discrete finite element scheme based in the LPG method for solving 
	\eqref{sys.kdvb.linear}. We begin by describing the LPG approximation framework and listing the 
	basic properties used in the analysis. 
	
	For any real constants $a,\,b$, let $\omega^{a,b}(x)=(1-x)^{a}(1+x)^{b}$ be weight functions on $(-1,1)=:I$ . 
	The inner product and norm in $L^2_{\omega^{a,b}}(I)$ are 
	denoted by $(\cdot,\cdot)_{\omega^{a,b}}$ and $\|\cdot\|_{\omega^{a,b}}$ respectively. We will omit the subscript 
	$\omega^{a,b}$ whenever $\omega^{a,b}(x)=1$. Let $\mathbb{P}_N(I)$ be the space of polynomials of degree at 
	most $N$ on the interval $I$ and 
	\[V_N=\mathbb{P}_N(I)\cap H_0^{2,1}(I),\quad W_N=\mathbb{P}_N(I)\cap H_0^1(I),\]
	where $H_0^{2,1}(I)=\{v\in H^2(I)\cap H_0^1(I): \partial_x v(1)=0\}$.  
	
	Let $L_k$ be the $k$th degree Legendre polynomial. To continuation we recall some properties of 
	Legendre polynomials which will be used in this paper  (see \cite{bookGabor}).
	\begin{equation}\label{prop.legrendre1}
		\int\limits_{-1}^1 L_j(x)L_k(x)dx=\frac{2}{2k+1}\delta_{jk}.
	\end{equation}
	\begin{equation}\label{prop.legrendre2}
		L_n(x)=\frac{1}{2n+1}(L_{n+1}'(x)-L_{n-1}'(x)),\quad n\in\mathbb{N}.
	\end{equation}
	\begin{equation}\label{prop.legrendre3}
		L'_{n}(x)=\sum_{\substack{k=0\\ k+n\,\,\text{odd}}}^{n-1}(2k+1)L_k(x).
	\end{equation}
	Bonnet's recursion formula:
	\begin{equation}\label{prop.legrendre4}
		(2n+1)xL_n(x)=(n+1)L_{n+1}(x)+ nL_{n-1}(x),\quad n\in\mathbb{N}.	
	\end{equation}
	
	Now, the semidiscrete LPG method for \eqref{sys.kdvb.linear} consists in finding $u_N(t)\in V_N$ such that for almost 
	every $t\in (0,T)$
	\begin{equation}\label{sys.full.semidiscrete}
	\left\{
    \begin{array}{llll}
    	(\partial_t u_N(t),v)+\alpha(t)(\partial_x^3u_N(t),v)-\beta(t)(\partial_x^2u_N(t),v)
    	=(f(t),v)
    	& & \forall v\in W_{N-1},\\
		(u_N(0),v)=(u_0,v)& &\forall v\in W_{N-1},
    \end{array}\right.
	\end{equation}
	holds. 

	From \cite{2000Maetal}, we shall use appropriate basis functions such that the corresponding matrices are sparse. 
	To this end, for $N\geq 3$ and $n=\overline{[0,N-3]}$, we define the basis functions $\{\phi_n\}$
	for the space $W_{N-1}$ by:
	\[\phi_n(x)=c_{n+1}(L_n(x)-L_{n+2}(x)),\quad c_n=\displaystyle\frac{1}{2n+1}.\]
	
	By taking into account \eqref{prop.legrendre2}, it is easy to verify that 
	$\partial_x\phi(x)=-L_{n+1}(x)$. 
	
	Next, we introduce the semidiscrete state variable $u_N(\cdot, t)\in V_N$ on spectral space and its vector
	representation:
	\begin{equation*}\label{id.semidiscretestate}
		u_N(\cdot,t)=(1-x)\sum\limits_{n=0}^{N-3}\hat{u}_n(t)\phi_n(\cdot),\quad 	
		{\bf{u}}(t)=(\hat{u}_0(t),\dots, \hat{u}_{N-3}(t))^T.
	\end{equation*}
	
	Respect to  the time discretization, a classical forward scheme is considered. Thus, let $\Delta t$ be the step size in time
	space and $t_k=k\Delta t$ ($k=\overline{[0,n_T]}$ and $t_{N_T}=T=n_T\Delta t$). 
	
	\noindent By simplicity, $v^k(x):=v(x,t_k)$ is denoted
	by $v^k$ and 
	\[v^{k+\frac{1}{2}}=\frac{1}{2}(v^{k+1}+v^k).\] 
	
	The fully discrete spectral method for \eqref{sys.kdvb.linear} reads: to find $u^k_N\in V_N$ such that
	\begin{equation}\label{sys.full.discrete}
	\left\{
    \begin{array}{llll}
    	(\Delta t)^{-1}(u_N^{k+1}-u_N^{k},v)+\alpha(t_k)(\partial_x^3u_N^{k+\frac{1}{2}},v)
    	-\beta(t_k)(\partial_x^2u_N^{k+\frac{1}{2}},v)=(f^{k+\frac{1}{2}},v)
    	& & \forall v\in W_{N-1},\\
		(u_N^0,v)=(u_0,v)& &\forall v\in W_{N-1},
    \end{array}\right.
	\end{equation}
	for $k=\overline{[0,n_T-1]}$.
\subsection{Stability analysis.} 
	In this subsection we establish  the stability of the Legendre--Petrov Galerkin method for solving 
	 the discrete system \eqref{sys.full.discrete}. Before that, some theoretical properties in weighted spaces must 
	 be considered.
	 
	 The following lemma establishes a Poincar\'e--type  inequality  \cite{2003ShenJie}.
	\begin{lema}\label{lema.poincare}
		Let $u\in V_N$. Then,
		\[\int\limits_I\frac{|u(x)|^2}{(1-x)^3}dx\leq \int\limits_I\frac{|\partial_x u(x)|^2}{1-x}dx.\]	
	\end{lema}
	\begin{remark}\label{obs.identities.aux}
		Observe that, for any $v\in W_{N-1}$ and $u\in V_N$ follows
 		\[-(\partial_x^2((1-x)v),\partial_x v)=2\|\partial_x v\|^2-((1-x)\partial_x^2 v,\partial_x v)
 		=\frac{3}{2}\|\partial_x v\|^2+|\partial_x v(-1)|^2\]
 		and 
 		\[-(\partial_x^2 u,(1-x)^{-1}u)=((1-x)^{-2}u,\partial_x u)+((1-x)^{-1}\partial_x u,\partial_x u)
 		=-\|u\|^2_{\omega^{-3,0}}+\|\partial_x u\|^2_{\omega^{-1,0}}\geq 0,\]
 		where the above inequality is consequence of Lemma \ref{lema.poincare}.	
	\end{remark}
	The next theorem provides expressions for the stability of the LPG scheme given in \eqref{sys.full.discrete}.  
	\begin{theorem}\label{th1.stability} 
		Let $T>0$ and $\omega(x)=\omega^{-1,0}(x)$. Then,
		\begin{enumerate}
		\item [i)]  Assume $\beta(t)\geq 0$ and $\alpha(t)\geq\displaystyle\frac{1}{3\ell}$,  for any  
			$\ell\geq 1$ and $ 0\leq t\leq T$. Then, the  LPG 
			approximation $u_N^k$ of \eqref{sys.kdvb.linear} satisfies 
			\begin{equation}\label{ine.stability1}
			\begin{array}{ll}
			&\|u_N^n\|_{\omega}^2+\Delta t\sum\limits_{k=0}^{n-1}(3\ell\alpha(t_k)-1)\|\partial_x u_N^{k+\frac{1}{2}}\|^2
			+\Delta t\sum\limits_{k=0}^{n-1}\alpha(t_k)|u_N^{k+\frac{1}{2}}(-1)|^2\\
			&\hspace{3cm}+\Delta t\sum\limits_{k=0}^{n-1}\beta(t_k)\Bigl(\|\partial_x u_N^{k+\frac{1}{2}}\|_{\omega}^2
			-\|u_N^{k+\frac{1}{2}}\|_{\omega^{-3,0}}^2\Bigr)\\
			&\leq 8\ell\|u_N^0\|^2_{\omega}+8\Delta t\ell^2\sum\limits_{k=0}^{n-1}\|f^{k+\frac{1}{2}}\|_{H^{-1}(I)}^2,	
			\end{array}
			\end{equation}
			for all $0<n\leq n_T$.	
		\item [ii)] Assume $\beta(t)>0$ and $\alpha(t)>0$, for $t\in [0,T]$. Then, the LPG 
			approximation $u_N^k$ of \eqref{sys.kdvb.linear} satisfies		
			\begin{equation}\label{ine.stability2}
			\begin{array}{ll}
			&\|u_N^n\|_{\omega}^2+\Delta t\sum\limits_{k=0}^{n-1}\alpha(t_k)\|\partial_x u_N^{k+\frac{1}{2}}\|^2
			+\Delta t\sum\limits_{k=0}^{n-1}\alpha(t_k)|u_N^{k+\frac{1}{2}}(-1)|^2\\
			&\hspace{3cm}+\Delta t\sum\limits_{k=0}^{n-1}\beta(t_k)\Bigl(\|\partial_x u_N^{k+\frac{1}{2}}\|_{\omega}^2
			-\|u_N^{k+\frac{1}{2}}\|_{\omega^{-3,0}}^2\Bigr)\\
			&\leq 4\|u_N^0\|^2_{\omega}+4\Delta t\sum\limits_{k=0}^{n-1}\displaystyle\frac{1}{\alpha(t_k)}\|f^{k+\frac{1}
			{2}}\|_{H^{-1}(I)}^2,	
			\end{array}
			\end{equation}
			for all $0<n\leq n_T$.		
		\end{enumerate}
	\end{theorem}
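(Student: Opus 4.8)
The plan is to carry out a discrete energy estimate by testing \eqref{sys.full.discrete} against the weighted midpoint state $v=(1-x)^{-1}u_N^{k+\frac12}$. This is the crucial choice, and it is admissible: from the representation $u_N^{k+\frac12}=(1-x)\sum_{n}\hat u_n^{k+\frac12}\phi_n$ one has $(1-x)^{-1}u_N^{k+\frac12}=\sum_n\hat u_n^{k+\frac12}\phi_n\in W_{N-1}$, so $v$ is a legitimate test function. With this $v$, the weight $\omega=\omega^{-1,0}$ is precisely the one for which the two identities of Remark \ref{obs.identities.aux} apply, and the whole argument reduces to reading off those identities term by term.

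First I would expand the three spatial terms. The discrete time derivative becomes a telescoping weighted energy through the midpoint algebra, $(u_N^{k+1}-u_N^{k},(1-x)^{-1}u_N^{k+\frac12})=\frac12\bigl(\|u_N^{k+1}\|_\omega^2-\|u_N^{k}\|_\omega^2\bigr)$. For the third--order term I integrate by parts once; since $v(\pm1)=0$ the boundary contributions drop, and the first identity of Remark \ref{obs.identities.aux} (applied with $(1-x)v=u_N^{k+\frac12}$) converts it into the non--negative dispersive quantity $\frac32\|\partial_x v\|^2+|\partial_x v(-1)|^2$, i.e.\ the $\alpha(t_k)$--weighted terms on the left of \eqref{ine.stability1}--\eqref{ine.stability2}. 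For the second--order term the second identity of Remark \ref{obs.identities.aux} gives directly $-(\partial_x^2 u_N^{k+\frac12},v)=\|\partial_x u_N^{k+\frac12}\|_\omega^2-\|u_N^{k+\frac12}\|_{\omega^{-3,0}}^2$, which is non--negative by Lemma \ref{lema.poincare}. Multiplying by $2\Delta t$ and summing in $k$ assembles the left--hand sides of \eqref{ine.stability1} and \eqref{ine.stability2}, up to the forcing pairing $2\Delta t\sum_k(f^{k+\frac12},v)$.

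It then remains to absorb the forcing. Since $v\in W_{N-1}\subset H_0^1(I)$, duality together with a Poincar\'e inequality yields $(f^{k+\frac12},v)\le C\|f^{k+\frac12}\|_{H^{-1}(I)}\|\partial_x v\|$, and I would play this off against the coercive $\frac32\alpha(t_k)\|\partial_x v\|^2$ by Young's inequality. For part (i) the hypothesis $\alpha(t_k)\ge\frac1{3\ell}$ suggests the Young parameter $\tfrac1\ell$; after rescaling the resulting inequality by $\ell$ one is left with the coefficient $3\ell\alpha(t_k)-1\ge0$ in front of $\|\partial_x v\|^2$ and with the constants $8\ell,\,8\ell^2$ on the right of \eqref{ine.stability1} (the factor $\ell\ge1$ being used freely to reduce the retained non--negative boundary and dissipation terms). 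For part (ii), where only $\alpha(t_k)>0$ is assumed, I would instead take the $\alpha(t_k)$--weighted parameter $2\alpha(t_k)$, which leaves the coefficient $\alpha(t_k)$ on $\|\partial_x v\|^2$ and replaces the forcing bound by $\frac1{\alpha(t_k)}\|f^{k+\frac12}\|_{H^{-1}(I)}^2$, producing \eqref{ine.stability2}.

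The heart of the matter is the coercivity of the frozen--coefficient spatial operator in the weighted inner product; this is exactly what Remark \ref{obs.identities.aux} and Lemma \ref{lema.poincare} provide, so the genuine work is (a) recognising that $(1-x)^{-1}u_N^{k+\frac12}$ is the right test function and verifying its admissibility, and (b) calibrating the Young parameter to the admissible range of $\alpha$, which is what produces the $\ell$--dependence in \eqref{ine.stability1}. I expect no difficulty from the time dependence itself: because the scheme is linear and the midpoint discretisation is tested at the midpoint, $\alpha(t_k)$ and $\beta(t_k)$ enter each step only as non--negative scalars, the energy identity is exact, and no discrete Gronwall estimate is needed.
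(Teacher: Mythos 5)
Your proposal is correct and takes essentially the same route as the paper's proof: the paper likewise tests \eqref{sys.full.discrete} with $v=2\ell\,\omega\,u_N^{k+\frac{1}{2}}$ (your $(1-x)^{-1}u_N^{k+\frac{1}{2}}$ up to normalization), invokes Remark \ref{obs.identities.aux} and Lemma \ref{lema.poincare} for coercivity, bounds the forcing via the same $H^{-1}$--duality with exactly your Young calibrations ($\ell$--weighted in part (i), $\alpha(t_k)$--weighted in part (ii)), and then passes from the $v_N$--quantities back to the $u_N$--quantities through $\partial_x u_N^{k+\frac{1}{2}}(-1)=2\partial_x v_N^{k+\frac{1}{2}}(-1)$ and $\|\partial_x u_N^{k+\frac{1}{2}}\|^2\le 8\|\partial_x v_N^{k+\frac{1}{2}}\|^2$, which is precisely where the constants $8\ell$ and $8\ell^2$ originate. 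Your explicit verification that $(1-x)^{-1}u_N^{k+\frac{1}{2}}\in W_{N-1}$ is a detail the paper leaves implicit, but otherwise the two arguments coincide step for step.
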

 	\begin{proof}[Proof of Theorem \ref{th1.stability}] \smallskip\
		\begin{enumerate}
 		\item  [i)] Let us consider the test function $v=2\ell v_N^{k+\frac{1}{2}}$ in \eqref{sys.full.discrete}, with
 			$v_N^k=u_N^k\omega$. Then, after using remark \ref{obs.identities.aux} we get
 			\begin{equation}\label{eq.aux1.proofth1}
 			\begin{array}{lll}
 				&2\ell(\Delta t)^{-1}(u_N^{k+1}-u_N^{k},u_N^k\omega)+3\ell\alpha(t_k)\|\partial_x v^{k+\frac{1}{2}}\|^2
 				+2\ell\alpha(t_k)|\partial_x v^{k+\frac{1}{2}}(-1)|^2\\
 				&\hspace{1cm}+2\ell\beta(t_k)\Bigl(-\|u_N^{k+\frac{1}{2}}\|^2_{\omega^{-3,0}}
 				+\|\partial_x u_N^{k+\frac{1}{2}}\|^2_{\omega}\Bigl)\\
 				&\leq 2|(f^{k+\frac{1}{2}}, \ell v_N^{k+\frac{1}{2}})|.
 			\end{array}
 			\end{equation}
 			Using the fact that $\|u\|_{H^{-1}(I)}=\sup\limits_{v\in H_0^1(I)}\frac{|(u,v)|}{|v|_2}$ and  
 			Young's inequality  (i.e., $ab\leq \frac{a^p}{p}+\frac{b^q}{q}$; $\frac{1}{p}+\frac{1}{q}=1$; $a,b\geq 0$)
 			with $p=q=2$ and $a=\ell |f^{k+\frac{1}{2}}|$, $b=|v_N^{k+\frac{1}{2}}|$, the right hand side of the previuos
 			estimate can be estimated by 
 			\[2|(f^{k+\frac{1}{2}}, \ell v_N^{k+\frac{1}{2}})|
 			\leq \ell^2\|f^{k+\frac{1}{2}}|\|_{H^{-1}(I)}^2+\|\partial_ x v_N^{k+\frac{1}{2}}\|^2.\]
 			Replacing the above estimate into \eqref{eq.aux1.proofth1} and summing for $k=[\overline{0,n-1}]$, we obtain 
 			\begin{equation*}
 			\begin{array}{ll}
				&\ell\|u_N^n\|_{\omega}^2+\Delta t\sum\limits_{k=0}^{n-1}(3\ell\alpha(t_k)-1)\|\partial_x v_N^{k+\frac{1}{2}}\|^2
				+2\Delta t\ell\sum\limits_{k=0}^{n-1}\alpha(t_k)|v_N^{k+\frac{1}{2}}(-1)|^2\\
				&\hspace{1cm} +2\Delta t\ell\sum\limits_{k=0}^{n-1}\beta(t_k)\Bigl(-\|u_N^{k+\frac{1}{2}}\|^2_{\omega^{-3,0}}
 				+\|\partial_x u_N^{k+\frac{1}{2}}\|^2_{\omega}\Bigl)\\
				&\leq \ell\|u_N^0\|^2_{\omega}+\Delta t\ell^2\sum\limits_{k=0}^{n-1}\|f^{k+\frac{1}{2}}\|_{H^{-1}(I)}^2,	
			\end{array}
			\end{equation*}
			Finally, note that $\partial_x u_N^{k+\frac{1}{2}}(-1)=2\partial_x v_N^{k+\frac{1}{2}}(-1)$ and 
			\[\|\partial_x u_N^{k+\frac{1}{2}}\|^2\leq 2\|(1-x)\partial_x v_N^{k+\frac{1}{2}}\|^2
			+2\|\partial_x v_N^{k+\frac{1}{2}}\|^2\leq 8\|\partial_x v_N^{k+\frac{1}{2}}\|^2.\]
			This argument allows to deduce \eqref{ine.stability1} and ends the proof to the first case.
			
		\item  [ii)] Since the proof of \eqref{ine.stability2} follows the above structure, we have omitted the details.
			However, in this case, the term $(f^{k+\frac{1}{2}}, v_N^{k+\frac{1}{2}})$ is upper bounded by 
			using Young's inequality
 			(i.e., $ab\leq \frac{a^p}{p}+\frac{b^q}{q}$; $\frac{1}{p}+\frac{1}{q}=1$; $a,b\geq 0$)
 			with $p=q=2$ and $a=\alpha^{\frac{1}{2}}|v_N^{k+\frac{1}{2}}|$,\, 
 			$b=\alpha^{-\frac{1}{2}}|f^{k+\frac{1}{2}}|$.   	
 		\end{enumerate}
		This concludes the proof of Theorem \ref{th1.stability}. 
 	\end{proof}
 	\begin{remark}\label{obs.estability1}
 		In the dissipative case (that is, $\alpha=0$) where the symmetry of the main operator is guaranteed, 
 		a Galerkin approximation with Legendre polynomials in $W_N$ turns out to be more convenient than the 
		Petrov--Galerkin method. Otherwise, a LPG scheme might be inestable even for certain constant dissipation coefficients. 
 	\end{remark}	
 	\begin{remark}
	 	Theorem \ref{th1.stability} allows to visualize how the presence of a positive time--dependent dispersion
	 	coefficient could affect the stability if an external source acts into the system, see 
	 	\eqref{ine.stability2}. Nevertheless, it can be corrected by considering a restriction upon the
	 	dispersion coefficient, and therefore the LPG method is stable, see \eqref{ine.stability1}.  
	 \end{remark} 	
\subsection{Error analysis}	 
	In this paragraph, we present approximation properties of some projection operators, which are used later on. 	
	First, we recall a basic result of Jacobi polynomial approximation \cite{2000Li}. Let $P_N^{a,b}$ be the 
	$L^2_{\omega^{a,b}}(I)$--orthogonal projector $L^2_{\omega^{a,b}}(I)\mapsto \mathbb{P}_N(I)$ and by simplicity, 
	$P_N:=P_N^{0,0}$. 
	\begin{lema}\label{lema.projector}
		Assume $a,b>1$ and $r>0$. Then, for any $v\in H^r(I)$ and any $0 \leq s\leq r$, 
		\begin{equation}\label{eq.projector}
			\|\partial_x^s(P_N^{a,b}v-v)\|_{\omega^{a+s,b+s}}\leq CN^{s-r}\|\partial_x^r v\|_{\omega^{a+r,b+r}}.
		\end{equation}
	\end{lema}
	Now, from \cite{2000Maetal}, we define $\Pi_N:H_0^{2,1}(I)\mapsto V_N$ such that 
	\begin{equation}\label{def.pi}
		(\partial_x^2(\Pi_N u-u),\partial_x v)=0,\quad \forall v\in W_{N-1}.
	\end{equation}
	Moreover, $\Pi_N$ satisfies 
	\begin{equation}\label{property0.pi}
		\Pi_N u:=\bar{\partial}_x^{-2}P_{N-2}\partial_{x}^{2}u,
	\end{equation}
	where 
	\[\bar{\partial}_x^{-1}v(x):=-\int\limits_{x}^{1}v(y)dy,\quad\quad \bar{\partial}_x^{-i}v(x)
	=(\bar{\partial}_x^{-1})^{i}v(x).\]	

	\begin{lema}\label{lema.properties.pi}
	Assume $u\in H_0^{2,1}(I)\cap H^r(I)$ and $r\geq 2$. Then
	\begin{enumerate}
		\item [i)] 	$\|\partial_x^s(\Pi_Nu-u)\|_{\omega^{s-2,s-2}}\leq CN^{s-r}\|\partial_x^r u\|_{\omega^{r-2,r-2}}$,\, for any $0\leq s\leq 2$.
		\item [ii)] $(\Pi_N u-u,v)=0,$  for all $v\in \mathbb{P}_{N-4}(I).$
		\item [iii)] $\|\partial_x(\Pi_Nu-u)\|\leq CN^{2-r}\|\partial_x^r u\|_{\omega^{r-2,r-2}}$. 
	\end{enumerate}
	\end{lema}
	\begin{proof} The proof of the first two items can be found in \cite{2000Li}. Thus, we only proof the last point. 
	
		Then, using Lemma \ref{lema.projector}, \eqref{property0.pi}, the first part of this lemma, and also the 
		Poincar\'e inequality, we can deduce
		\begin{equation*}
 		\begin{array}{ll}
			\|\partial_x(\Pi_Nu-u)\|^2&=(\partial_x(\Pi_Nu-u),\partial_x\bar\partial_x^{-1}(\partial_x(\Pi_Nu-u)))\\
			&=|(\partial_x^2(\Pi_Nu-u),\bar\partial^{-1}_x(\partial_x(\Pi_Nu-u)))|\\
			&=|((P_{N-2}-I)\partial_x^2 u,\bar\partial^{-1}_x(\partial_x(\Pi_Nu-u))|\\
			&=|((P_{N-2}-I)\partial_x^2 u,\Pi_Nu-u)|\\
			&\leq \|(P_{N-2}-I)\partial_x^2 u\| \|\Pi_Nu-u\|\\
			&\leq CN^{2-r}\|\partial_x^r u\|_{\omega^{r-2,r-2}}\|\partial_x(\Pi_Nu-u)\|.	
		\end{array}
		\end{equation*}		
	\end{proof}

	Now, for the error analysis, let $u_N^k$ be the numerical solution  of the scheme \eqref{sys.full.discrete}  and let 
	$u\in X$ be a solution associated to  \eqref{sys.kdvb.linear}, where 
	\begin{equation*}\label{p.regularity.solutions}
		X\equiv C([0,T];H_0^{2,1}(I)\cap H^r(I))\cap H^1(0,T;H_0^{2,1}(I)\cap H^{\max\{2,r-1\}}(I))\cap 
		H^3(0,T;H^{-1}(I)),	\,\,\, r\geq 2.
	\end{equation*}
	Moreover, an assumption on the  coefficients $\alpha,\beta\in L^\infty([0,T])$ must be imposed, namely: 
	\begin{hypothesis}\label{h.th.error}
		For every $\varepsilon_1,\varepsilon_2>0$, 	
		\begin{equation}\label{hypo.H.teoerror}
		\Bigl(\Bigl(\displaystyle\frac{3}{8}-\displaystyle\frac{1}
 		{16}\varepsilon_1\Bigr)\alpha^k
 		-\Bigl(\displaystyle\frac{1}{8}\varepsilon_2+\displaystyle\frac{9}{8}\Bigr)\beta^k\Bigr)>0,
 		\quad \forall k=\overline{[0,n_T-1]}.
		\end{equation}
	\end{hypothesis} 
	In what follows, $\hat{e}_N^k=\Pi_Nu(\cdot,t_k)-u_N^k$,\, $\tilde{e}^k_N=u(\cdot,t_k)-\Pi_Nu(\cdot,t_k)$,\, and 
	$e^k_N=u(\cdot,t_k)-u_N^k=\tilde{e}^k_N+\hat{e}_N^k$.	
	\begin{theorem}\label{teo.error}
		Let $u\in X$ and let H\ref{h.th.error} be satisfied. Then, for $0\leq n\leq n_T$ 
		\begin{equation}\label{ine.teo.error1}
			\|u_{N}^{n}-u^{n}\|\leq\sqrt{2} \|u_{N}^{n}-u^{n}\|_{\omega^{-1,0}}\lesssim C_{\alpha}(N^{-r}+(\Delta t)^{2})
			+C_{\beta}(N^{1-r}+(\Delta t)^{2}),
		\end{equation}
		where $C_\alpha\approx \Bigl(\min\limits_{t\in [0,T]}\alpha(t)\Bigr)^{-1}$ and 
		$C_\beta\approx \|\beta\|_{L^\infty([0,T])}$.
	\end{theorem}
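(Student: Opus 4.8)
The plan is to use the splitting $e_N^k=\tilde{e}_N^k+\hat{e}_N^k$ introduced before the statement, treating the projection part $\tilde{e}_N^k=u(\cdot,t_k)-\Pi_N u(\cdot,t_k)$ by approximation theory and the discrete part $\hat{e}_N^k=\Pi_N u(\cdot,t_k)-u_N^k$ by an energy argument that mirrors Theorem \ref{th1.stability}. For the projection part, Lemma \ref{lema.properties.pi}(i) with $s=0$ gives $\|\tilde{e}_N^k\|_{\omega^{-2,-2}}\lesssim N^{-r}\|\partial_x^r u\|_{\omega^{r-2,r-2}}$, while (i) with $s=1$ and (iii) give the gradient bounds; these are the sources of the optimal factor $N^{-r}$ and the suboptimal factor $N^{1-r}$ in \eqref{ine.teo.error1}. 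Since $\|\cdot\|\le\sqrt{2}\|\cdot\|_{\omega^{-1,0}}$ on $I$, it then suffices to bound $\hat{e}_N^n$ in the weighted norm $\|\cdot\|_{\omega}$ with $\omega=\omega^{-1,0}$, and to combine this with the projection bound by the triangle inequality.

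First I would derive the error equation for $\hat{e}_N^k$. Inserting the exact solution $u(\cdot,t_k)$ into \eqref{sys.full.discrete} and subtracting the scheme satisfied by $u_N^k$ gives, for every $v\in W_{N-1}$,
\begin{equation*}
(\Delta t)^{-1}(\hat{e}_N^{k+1}-\hat{e}_N^k,v)+\alpha(t_k)(\partial_x^3\hat{e}_N^{k+\frac{1}{2}},v)-\beta(t_k)(\partial_x^2\hat{e}_N^{k+\frac{1}{2}},v)=(\mathcal{R}^k,v),
\end{equation*}
where the residual $\mathcal{R}^k$ gathers the Crank--Nicolson temporal truncation, the error due to freezing $\alpha,\beta$ at $t_k$, and the spatial projection residual $\Pi_N u-u$. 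The decisive simplification is that the dispersive projection residual vanishes: integrating by parts with the boundary conditions of $V_N$ and invoking the defining identity \eqref{def.pi}, one has $(\partial_x^3(\Pi_N u-u),v)=-(\partial_x^2(\Pi_N u-u),\partial_x v)=0$ for all $v\in W_{N-1}$. Hence only the dissipative projection residual $\beta(t_k)(\partial_x^2(\Pi_N u-u),v)$ survives in $\mathcal{R}^k$, and after an integration by parts it is controlled through the $\|\partial_x(\Pi_N u-u)\|$--type bounds of Lemma \ref{lema.properties.pi}; this is exactly why dissipation carries the extra power $N^{1-r}$.

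Next I would test the error equation with $v=\hat{e}_N^{k+\frac{1}{2}}\omega$, as $v=2\ell v_N^{k+\frac{1}{2}}$ was used in Theorem \ref{th1.stability}, and apply the identities of Remark \ref{obs.identities.aux}. This reproduces on the left the nonnegative structure of \eqref{ine.stability1}: a coercive dispersive part controlling $\|\partial_x(\hat{e}_N^{k+\frac{1}{2}}\omega)\|^2$ and the boundary value at $-1$, together with the dissipative combination $\beta(t_k)\bigl(\|\partial_x\hat{e}_N^{k+\frac{1}{2}}\|_{\omega}^2-\|\hat{e}_N^{k+\frac{1}{2}}\|_{\omega^{-3,0}}^2\bigr)\ge 0$, the sign coming from Lemma \ref{lema.poincare}. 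Bounding $(\mathcal{R}^k,v)$ by Young's inequality and summing over $k=\overline{[0,n-1]}$ yields a discrete energy inequality shaped like \eqref{ine.stability1}, with the initial term $\|\hat{e}_N^0\|_{\omega}^2$ and $\Delta t\sum_k\|\mathcal{R}^k\|_{H^{-1}(I)}^2$ on the right. It is here that Hypothesis H\ref{h.th.error} is indispensable: the constants $\tfrac{3}{8},\tfrac{1}{16},\tfrac{1}{8},\tfrac{9}{8}$ play the role of Young parameters and absorption weights chosen so that, once the residual and the cross terms have been distributed, the positive dispersive quantity still dominates the indefinite dissipative ones and the left--hand side stays coercive at every level. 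Finally, estimating $\|\mathcal{R}^k\|_{H^{-1}(I)}$ by Taylor expansion (temporal part $(\Delta t)^2$, paired against the dispersive norm so that it enters with the Young weight $C_\alpha\approx(\min_t\alpha(t))^{-1}$) and by Lemma \ref{lema.properties.pi} (spatial parts $N^{-r}$ and $\|\beta\|_{L^\infty}N^{1-r}$), and adding the projection bound, produces \eqref{ine.teo.error1}.

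The step I expect to be the main obstacle is the treatment of the variable coefficients, which infects both the consistency and the coercivity. On the consistency side, $\mathcal{R}^k$ must be split so that the genuine second--order Crank--Nicolson truncation is separated cleanly from the error created by evaluating $\alpha,\beta$ at $t_k$ rather than at $t_{k+\frac{1}{2}}$, and controlling the latter without degrading the temporal rate is what forces the strong temporal regularity encoded in the solution class $X$ (its $H^1$-- and $H^3$--in--time factors). On the coercivity side, because $\alpha$ and $\beta$ depend on time, the absorption of the indefinite dissipative contribution into the dispersive one cannot use a single global constant; one must verify the pointwise inequality H\ref{h.th.error} at each time level, and keeping this balance uniform in $k$ throughout the summation is the delicate quantitative core of the proof.
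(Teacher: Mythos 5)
Your proposal follows essentially the same route as the paper's proof: the same splitting $e_N^k=\tilde e_N^k+\hat e_N^k$, the same cancellation of the dispersive projection residual via \eqref{def.pi}, the same weighted energy test $v\propto\omega\hat e_N^{k+\frac12}$ using Remark \ref{obs.identities.aux} and Lemma \ref{lema.poincare}, Young's inequalities with the weights that generate $C_\alpha$ and $C_\beta$ and whose absorption condition is precisely Hypothesis H\ref{h.th.error}, and the same consistency estimates (Taylor in time for the $(\Delta t)^2$ rate, Lemmas \ref{lema.projector} and \ref{lema.properties.pi} in space for $N^{-r}$ and $N^{1-r}$) concluded by the triangle inequality. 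The one place you go beyond the paper is your flag about freezing $\alpha,\beta$ at $t_k$ rather than $t_{k+\frac12}$: the paper's residual $g^k=\partial_t u^{k+\frac12}-\Pi_N\partial_t u^k$ does not isolate that commutation error explicitly, so your concern is apt but does not alter the method.
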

	\begin{proof}[Proof of Theorem \ref{teo.error}]
		From \eqref{sys.kdvb.linear}, \eqref{sys.full.discrete} and \eqref{def.pi}, for $k=\overline{[0,n_T-1]}$ and
		for any $v\in W_{N-1}$ we get
		\begin{equation*}\label{sys.error}
		\left\{
    	\begin{array}{llll}
    		(\Delta t)^{-1}(\hat{e}_N^{k+1}-\hat{e}_N^{k},v)-\alpha^{k}(\partial_x^2\hat{e}_N^{k+\frac{1}{2}},\partial_x v)
    		-\beta^{k}(\partial_x^2\hat{e}_N^{k+\frac{1}{2}},v)=(g^{k},v)
    		+\beta^{k}(\partial_x^2\tilde{e}_N^{k+\frac{1}{2}},v)
    		-\beta^{k}(\partial_x^2 e_N^{k+\frac{1}{2}},v),
    		& &\\
			(\hat{e}_N^0,v)=(\Pi_Nu^0-u_N^0,v),& &
    	\end{array}\right.
		\end{equation*}
		where 
		\[g^{k}=\partial_t{u}^{k+\frac{1}{2}}-\Pi_N\partial_t{u}^{k}.\]
		By simplicity, let $\omega(x)=\omega^{-1,0}(x)$. Considering in the previous system the 
		test function $v=2\omega\hat{e}_N^{k+\frac{1}{2}}$, and using Remark \ref{obs.identities.aux}, we get
		\begin{equation}\label{eq.aux.errors}
 			\begin{array}{lll}
 				&(\Delta t)^{-1}(\|\hat{e}_N^{k+1}\|^2_{\omega}-\|\hat{e}_N^{k}\|^2_{\omega})
 				+3\alpha^{k}\|\partial_x(\omega\hat{e}_N^{k+\frac{1}{2}})\|^2
 				+2\alpha^{k}|\partial_x(\omega\hat{e}_N^{k+\frac{1}{2}})(-1)|^2\\
 				&\hspace{1cm}+2\beta^{k}\Bigl(-\|\hat{e}_N^{k+\frac{1}{2}}\|^2_{\omega^{-3,0}}
 				+\|\partial_x\hat{e}_N^{k+\frac{1}{2}}\|^2_{\omega}\Bigl)\\
 				&= 2(g^{k},\omega\hat{e}_N^{k+\frac{1}{2}})
 				+2\beta^{k}(\partial_x^2\tilde{e}_N^{k+\frac{1}{2}},\omega\hat{e}_N^{k+\frac{1}{2}})
 				+2\beta^k(\partial_x e_N^{k+\frac{1}{2}},\partial_x(\omega\hat{e}_N^{k+\frac{1}{2}})).
 			\end{array}
 		\end{equation}
 		
 		From Lemma \ref{lema.projector}, Lemma \ref{lema.properties.pi} and Young's inequality 
 		(i.e, $ab\leq \frac{a^2}{2\varepsilon}+\frac{\varepsilon b^2}{2}$,\, $\varepsilon>0$)
 		follows:
 		\begin{equation*}
 			\begin{array}{lll}
 				|(g^{k},\omega\hat{e}_N^{k+\frac{1}{2}})|
 				&\leq |(\partial_t{u}^{k+\frac{1}{2}}-\partial_t{u}^{k},\omega\hat{e}_N^{k+\frac{1}{2}})|
 				+|((I-\Pi_N)\partial_t{u}^k, (I-P_{N-4})\omega\hat{e}_N^{k+\frac{1}{2}})|\\ 
 				&\leq C\Bigl(\|\partial_t{u}^{k+\frac{1}{2}}-\partial_t{u}^k\|_{H^{-1}(I)} 
 				 +N^{-r}\|\partial_t{u}^k\|_{H^{\max\{2, r-1\}}(I)}\Bigr)
 				 \|\partial_x(\omega\hat{e}_N^{k+\frac{1}{2}})\|\\
 				&\leq \displaystyle\frac{\varepsilon_1\alpha^k}{2}\|\partial_x
 				(\omega\hat{e}_N^{k+\frac{1}{2}})\|^2
 				+C_{\alpha^k,\varepsilon_1}\Bigl(\|\partial_t{u}^{k+\frac{1}{2}}-\partial_t{u}^k\|^2_{H^{-1}(I)} 
 				+N^{-2r}\|\partial_t{u}^k\|^2_{H^{\max\{2, r-1\}}(I)}\Bigr), 
 			\end{array}
 		\end{equation*}
		where $C_{\alpha^k,\varepsilon_1}=\displaystyle\frac{C^2}{2\varepsilon_1\alpha^k}$, for every 
		$\varepsilon_1>0$.
 		
 		Observe that the last two terms in the right--hand side of \eqref{eq.aux.errors} can be upper bounded
 		by using again Young's inequalities. Thus, putting together those estimates, we obtain
 		\begin{equation}\label{}
 			\begin{array}{lll}
 				&(\Delta t)^{-1}(\|\hat{e}_N^{k+1}\|^2_{\omega}-\|\hat{e}_N^{k}\|^2_{\omega})
 				+3\alpha^{k}\|\partial_x(\omega\hat{e}_N^{k+\frac{1}{2}})\|^2
 				+2\alpha^{k}|\partial_x(\omega\hat{e}_N^{k+\frac{1}{2}})(-1)|^2\\
 				&\hspace{1cm}+2\beta^{k}\Bigl(-\|\hat{e}_N^{k+\frac{1}{2}}\|^2_{\omega^{-3,0}}
 				+\|\partial_x\hat{e}_N^{k+\frac{1}{2}}\|^2_{\omega}\Bigl)\\
 				&\leq \displaystyle\frac{\varepsilon_1\alpha^k}{2}\|\partial_x
 				(\omega\hat{e}_N^{k+\frac{1}{2}})\|^2
 				+C_{\alpha^k,\varepsilon_1}\Bigl(\|\partial_t{u}^{k+\frac{1}{2}}-\partial_t{u}^k\|^2_{H^{-1}(I)} 
 				+N^{-2r}\|\partial_t{u}^k\|^2_{H^{\max\{2, r-1\}}(I)}\Bigr)\\
 				&\quad\quad
 				+\varepsilon_2\beta^k\|\partial_x(\omega\hat{e}_N^{k+\frac{1}{2}})\|^2
 				+\displaystyle\frac{\beta^k}{\varepsilon_2}\|\partial_x\tilde{e}_N^{k+\frac{1}{2}}\|^2
 				+\beta^k\|\partial_x e_N^{k+\frac{1}{2}}\|^2
 				+\beta^k\|\partial_x(\omega\hat{e}_N^{k+\frac{1}{2}})\|^2,
 			\end{array}
 		\end{equation}
		for every $\varepsilon_1,\varepsilon_2>0$.
		
		Thus, assuming the relation 
		\[\Bigl(\Bigl(3-\frac{1}{2}\varepsilon_1\Bigr)\alpha^k-
		\Bigl(\varepsilon_2+1\Bigr)\beta^k\Bigr)>0\]
		and adding for $k=\overline{[0,n_T-1]}$,  we deduce 
 		\begin{equation*}
 			\begin{array}{lll}
 				&\|\hat{e}_N^n\|_{\omega}^2
 				+\Delta t\sum\limits_{k=0}^{n_T-1}\Bigl(\Bigl(3-\displaystyle\frac{1}{2}\varepsilon_1\Bigr)
 				\alpha^k-\Bigl(\varepsilon_2+1\Bigr)\beta^k\Bigr)
 				\|\partial_x(\omega\hat{e}_N^{k+\frac{1}{2}})\|^2\\
 				&\hspace{1cm}
 				+2\Delta t\sum\limits_{k=0}^{n_T-1}\beta^{k}\Bigl(-\|\hat{e}_N^{k+\frac{1}{2}}\|^2_{\omega^{-3,0}}
 				+\|\partial_x\hat{e}_N^{k+\frac{1}{2}}\|^2_{\omega}\Bigl)\\
 				&\leq \|\hat{e}_N^0\|^2_{\omega}
				+\Delta t\sum\limits_{k=0}^{n_T-1}
				\displaystyle\frac{\beta^k }{\varepsilon_2}\|\partial_x\tilde{e}_N^{k+\frac{1}{2}}\|^2
				+\beta^k\|\partial_x e_N^{k+\frac{1}{2}}\|^2\\
				&\hspace{1cm}+\Delta t\sum\limits_{k=0}^{n_T-1}
				C_{\alpha^k,\varepsilon_1}\Bigl(\|\partial_t{u}^{k+\frac{1}{2}}-\partial_t{u}^k\|^2_{H^{-1}(I)} 
 				+N^{-2r}\|\partial_t{u}^k\|^2_{H^{\max\{2, r-1\}}(I)}\Bigr),	
			\end{array}
 		\end{equation*}
		for every $\varepsilon_1,\varepsilon_2>0$. 
		
		Now, using the hypothesis H\ref{h.th.error} and from the fact that
		\[\|\partial_x e_N^{k+\frac{1}{2}}\|^2\leq 2\|(1-x)\partial_x (\omega\hat{e}_N^{k+\frac{1}{2}})\|^2
			+2\|\partial_x(\omega e_N^{k+\frac{1}{2}})\|^2\leq 8\|\partial_x(\omega\hat{e}_N^{k+\frac{1}{2}})\|^2,\] 
		the above inequality can be transformed  by:
		\begin{equation}\label{x.aux.proof}
 			\begin{array}{lll}
 				&\|\hat{e}_N^n\|_{\omega}^2
 				+\Delta t\sum\limits_{k=0}^{n_T-1}\Bigl(\Bigl(\displaystyle\frac{3}{8}-\displaystyle\frac{1}
 				{16}\varepsilon_1\Bigr)\alpha^k
 				-\Bigl(\displaystyle\frac{1}{8}\varepsilon_2+\displaystyle\frac{9}{8}\Bigr)\beta^k\Bigr)
 				\|\partial_x{e}_N^{k+\frac{1}{2}}\|^2\\
 				&\hspace{1cm}
 				+2\Delta t\sum\limits_{k=0}^{n_T-1}\beta^{k}\Bigl(-\|\hat{e}_N^{k+\frac{1}{2}}\|^2_{\omega^{-3,0}}
 				+\|\partial_x\hat{e}_N^{k+\frac{1}{2}}\|^2_{\omega}\Bigl)\\
 				&\leq \|\hat{e}_N^0\|^2_{\omega}
				+\varepsilon_2^{-1}\|\beta\|_{L^\infty([0,T])}\Delta t\sum\limits_{k=0}^{n_T-1}
				\displaystyle\|\partial_x\tilde{e}_N^{k+\frac{1}{2}}\|^2\\
				&\hspace{1cm}
				+\displaystyle C\varepsilon_1^{-1}\Bigl(\min\limits_{t\in [0,T]}\alpha(t)\Bigr)^{-1}
				\Delta t\sum\limits_{k=0}^{n_T-1}
				\Bigl(\|\partial_t{u}^{k+\frac{1}{2}}-\partial_t{u}^k\|^2_{H^{-1}(I)} 
 				+N^{-2r}\|\partial_t{u}^k\|^2_{H^{\max\{2, r-1\}}(I)}\Bigr),
			\end{array}
 		\end{equation}
		for every $\varepsilon_1,\varepsilon_2>0$. 
		
		Finally, we estimate the terms in the right--hand side of 
		\eqref{x.aux.proof}. To do that, we use Lemma \ref{lema.properties.pi} and \cite{2000Li}. Then,
		a direct computation allows us to obtain the inequalities:
		\begin{equation*}
		 	\begin{array}{lll}
		 	&\Delta t\sum\limits_{k=0}^{n-1}\|\partial_t{u}^{k+\frac{1}{2}}-\partial_t{u}^k\|^2_{H^{-1}(I)}
			\leq C(\Delta t)^4 \|\partial_t^3{u}\|_{L^2(0,T;H^{-1}(I))}^2,\\
			&\Delta t\sum\limits_{k=0}^{n-1}\|\partial_t{u}^k\|^2_{H^{\max\{2, r-1\}}(I)}
			\leq C\|\partial_t{u}\|^2_{L^2(0,T;H^{{\max\{2, r-1\}}}(I))},\\
			\sum\limits_{k=0}^{n_T-1}
			\displaystyle\|\partial_x\tilde{e}_N^{k+\frac{1}{2}}\|^2
			& \leq CN^{2(1-r)}\|\partial_x^r u\|^2_{\omega^{r-2,r-2}},\quad \forall r\geq 2\quad  
			\mbox{and}\quad
			\|\tilde{e}^0\|_{\omega}\leq CN^{-r}\|u_0\|_{H^r(I)}.	
		 	\end{array}
		\end{equation*}
		Therefore, using the above estimates and the triangular inequality in \eqref{x.aux.proof}, the
		desired inequality is obtained. This arguments complete  the proof of Theorem \ref{teo.error}.
	\end{proof}
	\begin{remark}
		It is worth mentioning that the convergence analysis only for the KdV equation 
		with the same method was done in \cite{2000Maetal} for constant coefficients, that means, 
		in \eqref{sys.kdvb.linear} 
		$\alpha(t)=1$ and $\beta(t)=0$, for $t\in (0,T)$. Obviously, estimate \eqref{ine.teo.error1} implies 
		the case proven in \cite{2000Maetal}. On the other hand, by using dual--Petrov--Galerkin bases,
		\cite[Theorem 2.2] {2003ShenJie} involves a second--order term with constant coefficient 
		$\beta$ in the range $(-\frac{1}{3},\frac{1}{6})$, and $\alpha=1$. Note that our hypothesis
		H\ref{h.th.error} satisfies such parameter configuration when, for instance,  
		$\varepsilon_1,\varepsilon_2$ go to zero. 
	\end{remark}

\subsection{Implementation scheme.}\label{subsection.scheme} In this paragraph we discuss the numerically implementation of  the fully discrete spectral method given in \eqref{sys.full.discrete}. We need to solve at each time level the problem of finding $u_N^k\in V_N$ verifying 
	\begin{equation}\label{eq.weakform}
		(u_N^{k+1},v)+\frac{\Delta t}{2}\alpha(k\Delta t)(\partial_x^3u_N^{k+1},v)
		-\frac{\Delta t}{2}\beta(k\Delta t)(\partial_x^2u_N^{k+1},v)=(g^k,v)\quad \forall v\in W_{N-1},
	\end{equation}
	where 
	\[g^k=u_N^k-\frac{\Delta t}{2}\alpha(k\Delta t)\partial_x^3u_N^{k}
	+\frac{\Delta t}{2}\beta(k\Delta t)\partial_x^2u_N^{k}+\Delta tf^{k+\frac{1}{2}}.\] 
	
	Therefore, by considering $u_N^k(x)=(1-x)\sum\limits_{n=0}^{N-3}\hat{u}_n^k\phi_n(x)$
	and taking as test function $v=\phi_m$, for $m=\overline{[0,N-3]}$, the above identity can be written by
	\begin{equation*}
	\begin{array}{lll}
		&\sum\limits_{n=0}^{N-3}\Biggl( ((1-x)\phi_n,\phi_m)+\Delta t\alpha(k\Delta t)(L_{n+1},L_{m+1})
		-\displaystyle\frac{\Delta t}{2}\alpha(k\Delta t)((1-x)L_{m+1},\partial_x L_{n+1})\Biggr)\hat{u}_n^{k+1}\\
		&\hspace{1cm} +\displaystyle\frac{\Delta t}{2}\beta(k\Delta t)\sum\limits_{n=0}^{N-3}\Biggl( ((1-x)L_{n+1},L_{m+1})
		+(L_n-L_{n+2},L_{m+1})\Biggr)\hat{u}_n^{k+1}\\
		&=\sum\limits_{n=0}^{N-3} (RHS)_n\hat{u}_n^{k}
		+ \Delta t\sum\limits_{n=0}^{N-3} ((1-x)\phi_n,\phi_m)\hat{f}_n^{k+\frac{1}{2}},	
	\end{array}
	\end{equation*}
	where $(RHS)_n$ corresponds to
	\begin{equation*}
	\begin{array}{lll}
		 (RHS)_n=&((1-x)\phi_n,\phi_m)-\Delta t\alpha(k\Delta t)(L_{n+1},L_{m+1})
		+\displaystyle\frac{\Delta t}{2}\alpha(k\Delta t)((1-x)L_{m+1},\partial_x L_{n+1})\vspace{0.2cm}\\
		&-\displaystyle\frac{\Delta t}{2}\beta(k\Delta t)((1-x)L_{n+1},L_{m+1})
		-\displaystyle\frac{\Delta t}{2}\beta(k\Delta t)(L_n-L_{n+2},L_{m+1}).
	\end{array}
	\end{equation*}
	Based on the above representation, we build the matrices $K,L,M$ and $Q$ of size $(N-2)\times(N-2)$ with the 
	coefficients $k_{mn}, \ell_{mn}, a_{mn}$ and $q_{mn}$ defined as follows:
	\begin{equation*}
		k_{mn}=((1-x)L_{n+1},L_{m+1})+(L_n-L_{n+2},L_{m+1})=\left\{
		\begin{array}{lll}
			2c_{m+1}& &m=n,\\
			2c_m-2(m+1)c_mc_{m+1} & & n=m+1,\\
			-2(1+(m+2)c_{m+1})c_{m+2}&& n=m-1,\\
			0 && n\leq m-2,\\
			0 && n\geq m+2.
		\end{array}\right.
	\end{equation*}
	\begin{equation*}
		a_{mn}=((1-x)\phi_n,\phi_m)=\left\{
		\begin{array}{lll}
			2c_{m+1}^2(c_m+c_{m+2})& &m=n,\\
			-2c_{m+1}c_{m+2}^2(c_m+(m+3)c_{m+3}) & & n=m+1,\\
			-2c_{m+1}c_{m+2}c_{m+3}&& n=m+2,\\
			2(m+3)c_{m+1}c_{m+2}c_{m+3}c_{m+4}&& n=m+3,\\
			0 &&  n\geq m+4.
		\end{array}\right.
	\end{equation*}
	\begin{equation*}
		q_{mn}=((1-x)L_{m+1},\partial_x L_{n+1})=\left\{
		\begin{array}{lll}
			c_{m+1}-1& &m=n,\\
			2(-1)^{m+n+1} & & n\geq m+1,\\
			0 &&  n\leq m-1
		\end{array}\right.
	\end{equation*}
	and 
	\begin{equation*}
		\ell_{mn}=(L_{n+1},L_{m+1})=diag(2C_{m+1}).
	\end{equation*}
	Therefore, the matrix representation of problem \eqref{sys.full.discrete} is 
	\begin{equation}\label{matrixrepre}
		\mathbf{A}\mathbf{U}^{k+1}=\mathbf{B}\mathbf{U}^{k}+\mathbf{C}\mathbf{F}^{k+\frac{1}{2}},
	\end{equation}	
	with 
	\begin{equation}\label{matrixA}
		\mathbf{A}=M+\Delta t\alpha(k\Delta t) L-\frac{\Delta t}{2}\alpha(k\Delta t) Q+\frac{\Delta t}{2}\beta(k\Delta t) K,
	\end{equation}
	and 
	\begin{equation}\label{matrixB}
		\mathbf{B}=M-\Delta t\alpha(k\Delta t) L+\frac{\Delta t}{2}\alpha(k\Delta t) Q-\frac{\Delta t}{2}\beta(k\Delta t) K,
	\quad \mathbf{C}=\Delta tM.
	\end{equation}

	\begin{remark}
		Since our approach is based in \cite{2000Maetal}, the above mass matrix $\{a_{mn}\}$ is the same, as well as
		a part of the term $g^k$ given in \eqref{eq.weakform}. However, in \cite{2000Maetal} there are some 
		typos and incomplete information related to implementation that we have corrected in the present work.  	
	\end{remark}
	In order to display the  stability property for the fully discrete scheme \eqref{eq.weakform},
	in Figure  \ref{fig.eigenvalues} we plot the eigenvalues for the LPG discretization for two parametric configurations of 
	$\alpha$ and $\beta$. In relation to \cite{2000Maetal} where the stability analysis was done only for the 
	KdV equation with $\alpha=1$, we observe that for the KdVB equation with time dependent coefficients does not exist instable modes, neither. 
	Nevertheless, as mentioned in Remark \ref{obs.estability1}, it is possible if the constrains upon
	the dispersion--dissipation parameters established in Theorem \ref{th1.stability} are verified. Indeed, 
	the case where $\alpha$ is small enough (tends to zero) leads to instables modes for small values of $\beta$ and therefore, it is cannot
	be considered under this setting.  Additionally, Figure  \ref{fig.eigenvalues} allow us to visualize the 
	effect caused by the interaction among dispersion and dissipation parameters in the eigenvalues distribution of the spectral approximation.

\begin{figure}[!ht] 
\hspace{-.6 cm}
\includegraphics[width=8.2cm,trim=0 0 80 0,clip]{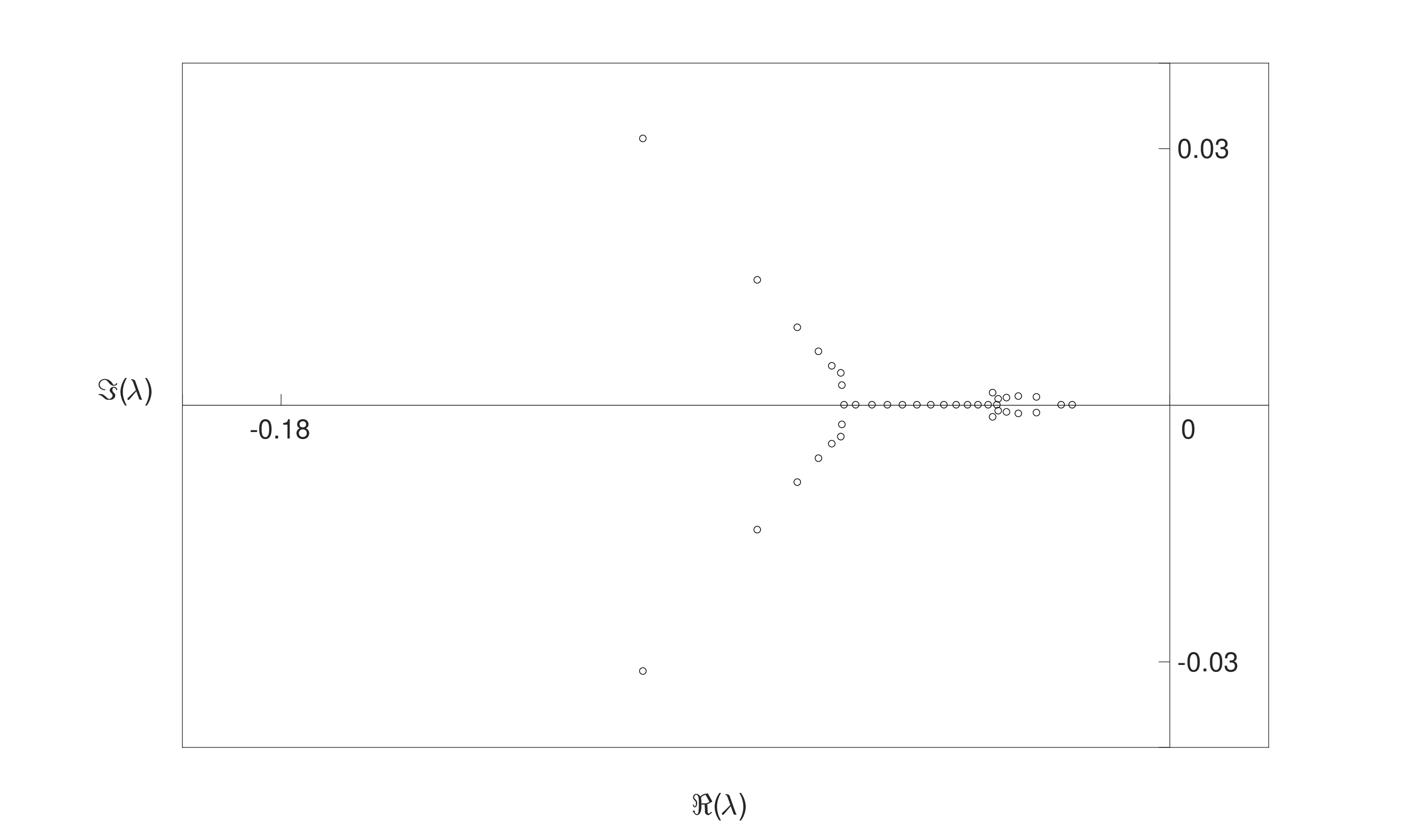}
\includegraphics[width=8.2cm,trim=0 0 80 0,clip]{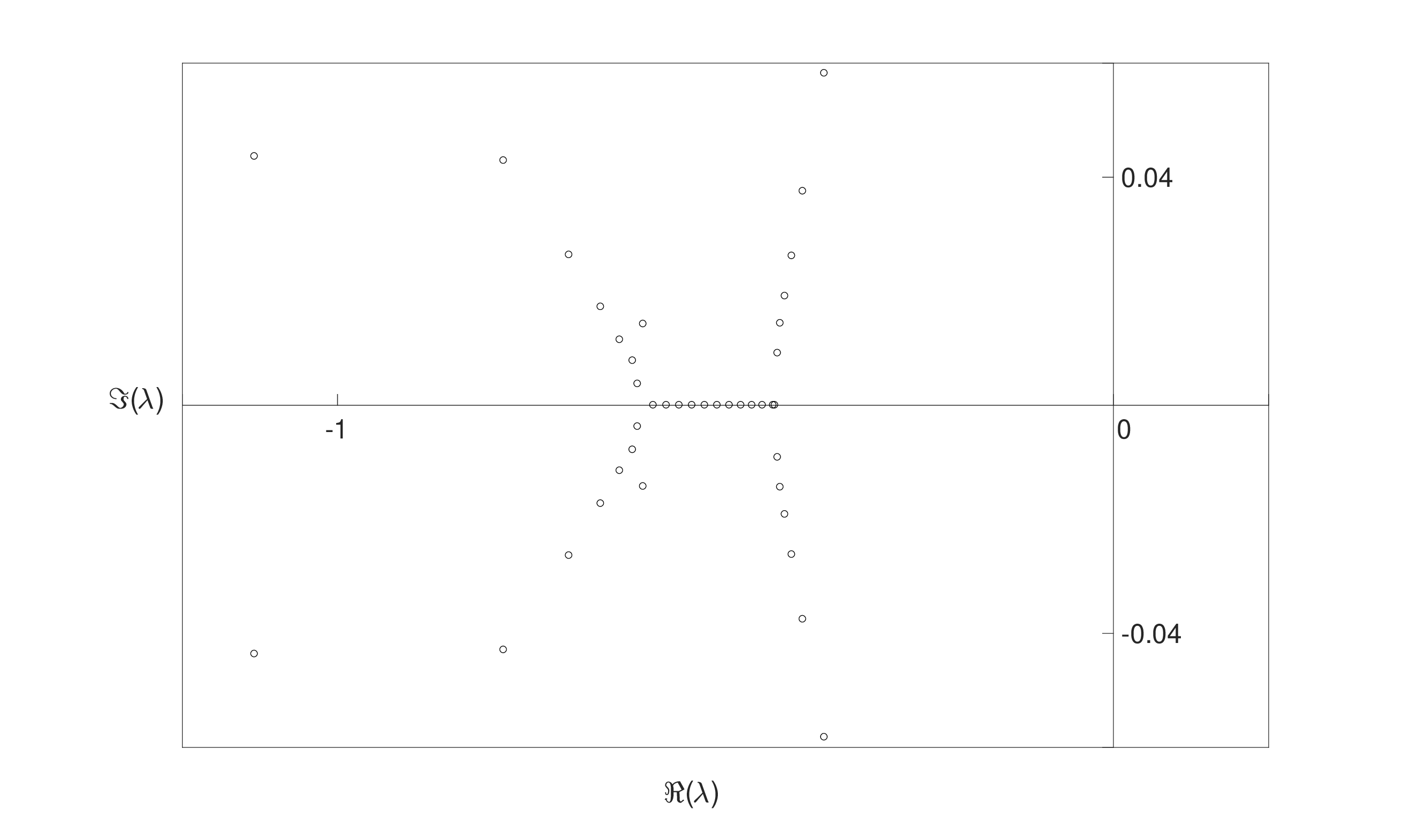}
\caption{Eigenvalues distribution of the spectral approximation for the KdVB equation, associated to the scheme 
	\eqref{eq.weakform} and matrix \eqref{matrixA}. In both cases $N=42,\, \Delta t=1$, and parameters 
	(Left) $\alpha=0.1$,\,$\beta=0.1$, (Right) $\alpha=1$,\,$\beta=0.3$.}
   \label{fig.eigenvalues}
\end{figure}

\section{Numerical Results}\label{section.numerica}

	In this section, we present results obtained from simulations of the LPG method for the KdVB equation
	with dispersion--diffusion variable coefficients, see  \eqref{sys.kdvb.linear}. Recall that aspects related to stability and convergence 
	have been previously studied in Theorem \ref{th1.stability} and Theorem \ref{teo.error}. In fact, those results depend on, 
	at least, four parameters, namely, $N$, $\Delta t$, $\alpha$ and $\beta$. In order to seek the behaviors associated to the 
	theoretical descriptions and to separate the effects of each corresponding parameter, it is necessary to split the numerical problem
	in several experiments with different interactions of the aforementioned parameters. Indeed, we provide a proper calibration
	for the dispersion and diffusion parameters, which in turn shows numerical evidences to particular cases presented in previous works.
\subsection{Experimental setup}
	In all the cases tested, we try to set a benchmark that allow to measure every numerical experiment in a unique from. 
	To be more exact, by considering that several variables are involved into the analysis, makes it necessary to introduce 
	a fair measure which would be able to give precise information of the accuracy of the experiment. It is with this aim that we 
	define the following functions, which satisfy  \eqref{sys.kdvb.linear}.
	Henceforth, the initial distribution is defined by
	\begin{equation}\label{CI}
		u(x,0)=\sin^2(a x)\sin(bx)
	\end{equation}
	and the source term as follows:
	\begin{equation}\label{CI2}
	\begin{array}{lll}
		f(x,t)&=\left((c-b^3\alpha(t))\sin^2(ax)+6a^2b\alpha(t)\cos(2ax)\right)\cos(bx+ct)\\
		&-a\alpha(t)(4a^2+3b^2)\sin(2ax)\sin(bx+ct)\\
		&+\beta(t)\left(-(2a^2\cos(2ax)-b^2\sin^2(ax))\sin(bx+ct)-2ab\sin(2ax)\cos(bx+ct)\right),	
	\end{array}
	\end{equation}
	where $a=\pi$ and $b=c=12$. 
	
	Taking into account the above data, the unique solution to \eqref{sys.kdvb.linear} can be obtained analytically. In fact, 
	the explicit form of the solution is given by
	\begin{equation}\label{SCI}
		u(x,t)=\sin^2(a x)\sin(bx+ct).
	\end{equation}
	Note that  the source (\ref{CI2}) really corresponds to a biparametric family, although its associated solution \eqref{SCI} 
	is free of parameters. In other words, all information concerning to the dispersion--dissipation 
	parameters is located in the source instead of the solution, which allow us to create uniformly measurement errors 
	respect to the analytical solution. Therefore,  we define
	the error $\epsilon$ in the norms $L^p$ ($1\leq p< \infty$) in space and $L^1$ in time as follows:
	\begin{equation}\label{error}
		\epsilon:=\frac{\Delta t}{N}\displaystyle\sum_{k=0}^{n_T}\left(\sum_{j=0}^N\left|u(x_j,t_k)-u_N^k(x_j)\right|^p\right)^{1/p}.
	\end{equation} 
	On the other hand, it is worth pointing out that the values of $a,b$ and $c$ have been taken from \cite{2000Maetal},  where 
	the authors defined (\ref{CI})--\eqref{SCI} for the KdV equation with dispersion coefficient $\alpha=1$. 
	
	Here,  $x_j$ represents the specific position defined by the Chebyshev--Gauss--Lobatto  points 
	\cite{bookCanuto}, whereas $t_k$ is the temporal discretization defined in the previous section. Clearly, our work 
	constitutes an extension to the linear model given in  \cite{2000Maetal}. 
	
\subsection{Temporal and spatial discretization}
	The main task in this paragraph consists on providing evidence according to Theorem \ref{teo.error}. Since 
	the spatial and temporal approximations show rate of convergence linked to the dispersion and diffusion parameters, 
	we pretend to observe both tendencies throughout  the same numerical experiment defined by (\ref{CI})--(\ref{SCI}), 
	but using different parametric configurations depending on the case.

\begin{figure}[h]
		\begin{center}
		\begin{tabular}{cc}
		\includegraphics[width=0.5\linewidth]{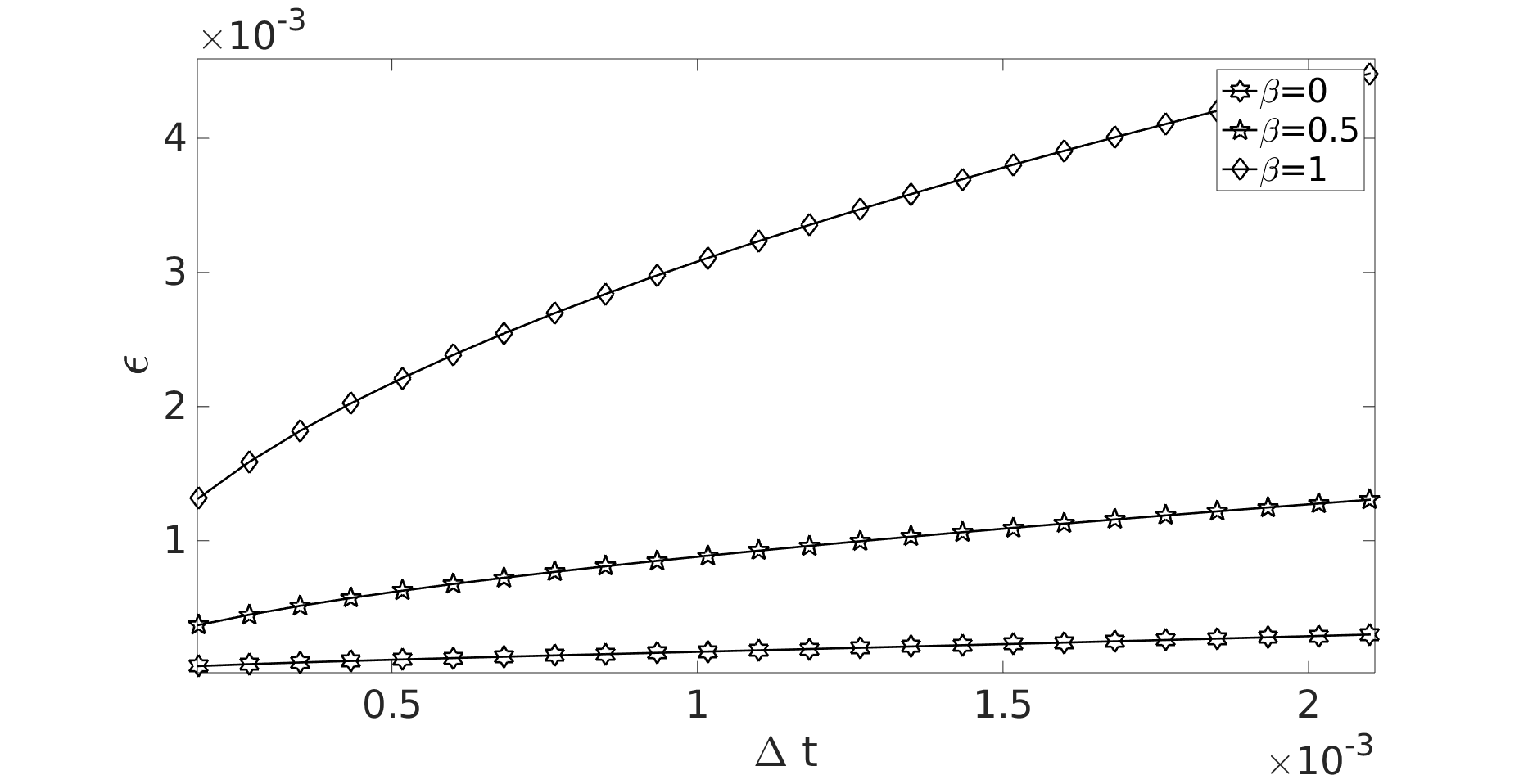} &
		\includegraphics[width=0.5\linewidth]{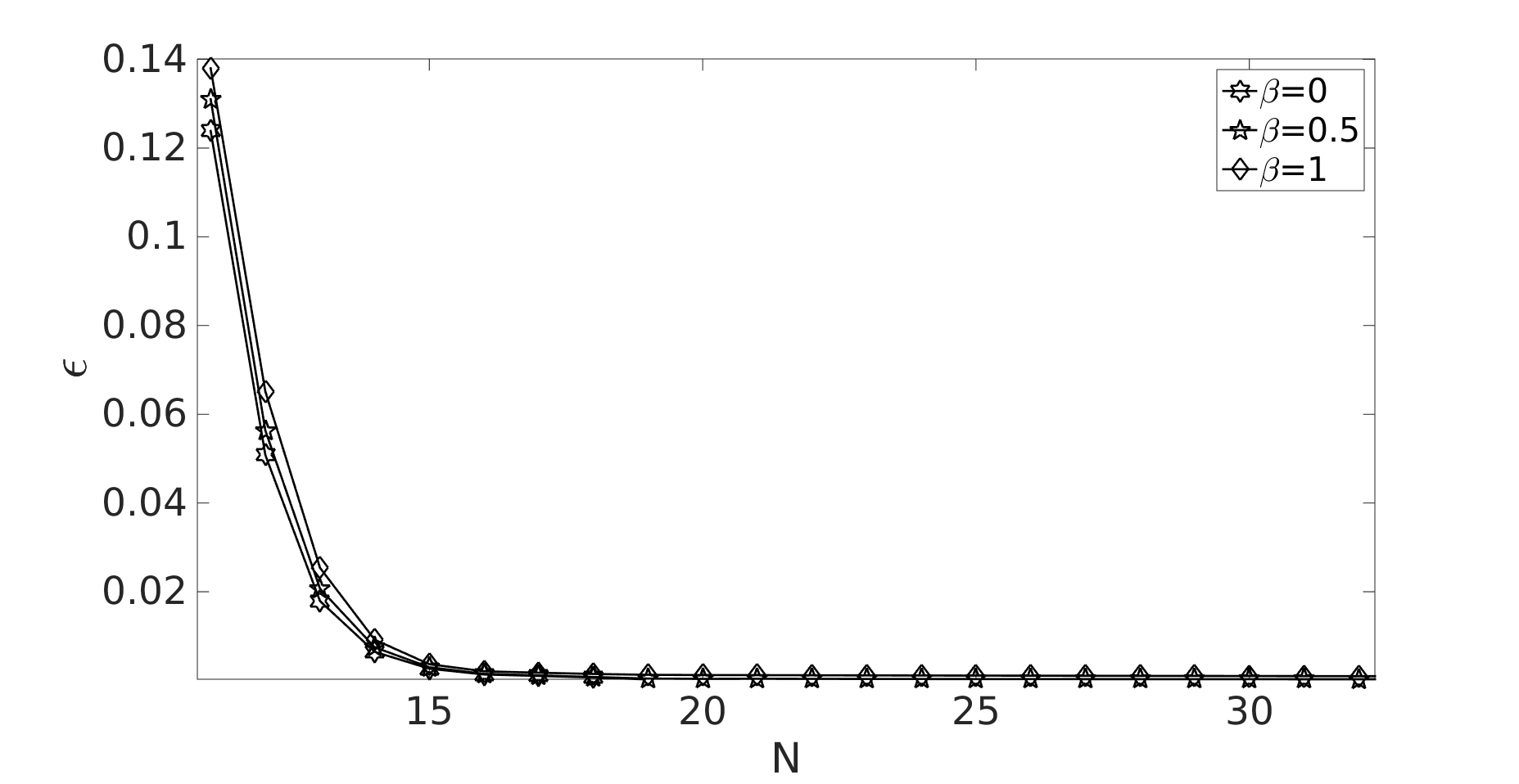} 
		\end{tabular}
		\end{center}
		\caption{Each mark corresponds to the error $\epsilon$ in $L^1(L^2)$--norm for $\alpha=1$ and three different $\beta$ values.
		(Left) temporal convergence and (Right) spatial convergence.
		}
		\label{ConvDtDx}
	\end{figure}

	\begin{table*}[h]
  \begin{center}
   \caption{Analysis of the temporal convergence using least squares method, and  fitting the solution of the numerical problem
		 by 	$u^k_N=u(x_n,t_k)[1+(\Delta t)^{\text{order}}]$.
		In all simulations we fix $\alpha=1$ and $N=32$. Here, 
		the extrapolated error values  using  (\ref{error}) with $p=1$ and $p=2$ as well as their corresponding 
		convergence orders are recorded.}
   		 \begin{tabular}{|c||c|c|c|c|}
		\hline
      		$\beta$ & $ L^1(L^1)\, -\text{norm}$ & \text{order} & $ L^1( L^2)\, -\text{norm}$ & \text{order}  \\
	\hline\hline
       $0$ & 0.01335901 & 1.87 & 0.00010075 & 2.80 \\
	\hline
       $0.2$ & 0.04785512 & 1.90 & 0.00046410 & 2.72  \\
      \hline
       $0.4$ & 0.16367134 & 2.10 & 0.00169221 & 2.54 \\
	\hline
       $0.6$ & 0.39841512 & 2.48 & 0.00316313 & 2.23 \\
      \hline
       $0.8$ & 0.71474315 & 2.62 & 0.00485123 & 1.71 \\
	\hline

    \end{tabular}\label{TABLAEST}
  \end{center}
\end{table*}

	First, we focus on the temporal convergence of the method. As starting point, we develop a massive experiment fixing the 
	number of modes and the dispersion parameter, namely,  $N=32$ and  $\alpha=1$, respectively. Besides, for five 
	different $\beta$ values, e.g.   $\beta=\{0,0.2,0.4,0.6,0.8\}$, we carry out $20$ simulations of $T=2$ seconds of time by fixing 
	steps $\Delta t$ in the range $\{(i+1)10^{-4}: i=1,\dots, 20\}$. In order to achieve a better visualization and without
	less of generality, we only depict three different cases of $\beta$ values in Figure \ref{ConvDtDx} (Left), where each curve
	represents a dispersion parameter $\beta$ and every mark shows the error obtained upon its corresponding temporal step $\Delta t$. 
	Note that, by fixing the dispersion coefficient at $\alpha=1$, we clearly observe a convergence of the value of $\epsilon$  to zero, 
	although its rate of convergence is affected by the $\beta$ dissipation coefficients. Motivated by this, it is interesting to study 
	the order of convergence for each curve by using a least squares fit. Table \ref{TABLAEST}  shows the five different 
	$\beta$ dissipation coefficients where both the order of convergence  and the extrapolated error values 
	are recorded. We can see that in average the temporal convergence tends to $2$ as expected from theoretical result, see \eqref{ine.teo.error1}.

\begin{table*}[h]
  \begin{center}
    \caption{Error analysis in $L^1(L^2)$--norm for seven different values of $N$ and five different values of $\beta$. 
		 Besides,  $\alpha=1$ and $\Delta t=10^{-4}$ seconds.}
    \begin{tabular}{|c||c|c|c|c|c|c|c|}
	\hline
      $\beta$  & $N=14$ & $N=16$ & $N=18$ & $N=20$ & $N=22$ & $N=24$ & $N=26$  \\
	\hline\hline
       $0$   & 0.00645621 & 0.00135455  & 0.00066981 & 0.00011939 & 0.00015875 & 0.00010339 & 0.00008585  \\
	\hline
       $0.2$ & 0.00691887  & 0.00140829 & 0.00069669 & 0.00018076 &  0.00020508 & 0.00016183 & 0.00014666  \\
      \hline
       $0.4$   & 0.00764791 & 0.00153235 & 0.00081798 & 0.00042376 & 0.00038715 & 0.00036853 & 0.00035271  \\
	\hline
       $0.6$  & 0.00865866 & 0.00183839 & 0.00119356 & 0.00091525 & 0.00088213 & 0.00083909 & 0.00080551  \\
      \hline
       $0.8$  & 0.00983571 & 0.00240629 & 0.00185255 & 0.00161797 & 0.00155112 &  0.00148371 & 0.00142658  \\
	\hline
    \end{tabular}\label{TABLAEST2}
  \end{center}
\end{table*}

	Now,  the spatial convergence analysis is carried out upon the same numerical experiment  but, fixing both 
	the temporal sampling to $\Delta t=10^{-4}$ seconds and the dispersion parameter to $\alpha=1$.
 	Again, the $\beta$ parameter is evaluated at  $\beta=\{0,0.2,0.4,0.6,0.8\}$ whereas, 
	the number of nodes, $N$, is progressively selected in the range  $11\leq N\leq 32$. The results are depicted
	in Figure \ref{ConvDtDx} (Right), where we observe a sharp accuracy convergence  around $14\leq N\leq 22$ nodes. 
	In fact, Figure \ref{ConvDtDx} (Right) allow to deduce  that the convergence order is much greater than two, it makes imposible 
	to create any confident approximation based on  least squares fitting. To precise this feature, we show in 
	Table \ref{TABLAEST2}  the error values in the $L^1(L^2)$--norm  for five different $\beta$ values
	and seven different values of $N$. In concordance with our theoretical result on convergence, Theorem \ref{teo.error}, 
	we should note that for any value of $N$, the dispersive case ($\beta=0$) is always more accurate than the other choices 
	of $\beta$ values. In addition, it is worth mentioning that the results shown in Figure \ref{ConvDtDx} (Right) and
	in Table \ref{TABLAEST2} are not enough to establish which is exactly the convergence order, $r$, and the dependence of $N$ in the 
	accuracy of the results. Also, it is true that we observe a faster tendency in the accuracy  than for the temporal convergence, but the fact that $r$ is 
	only bounded and cannot be fitted as a simple number, it makes necessary to broad the vision and to study the  spatial convergence through 
	other considerations always in completely  agreement with the theory already exposed.
\begin{figure}[h]
	\begin{center}
		\begin{tabular}{cc}
		\includegraphics[width=0.5\linewidth]{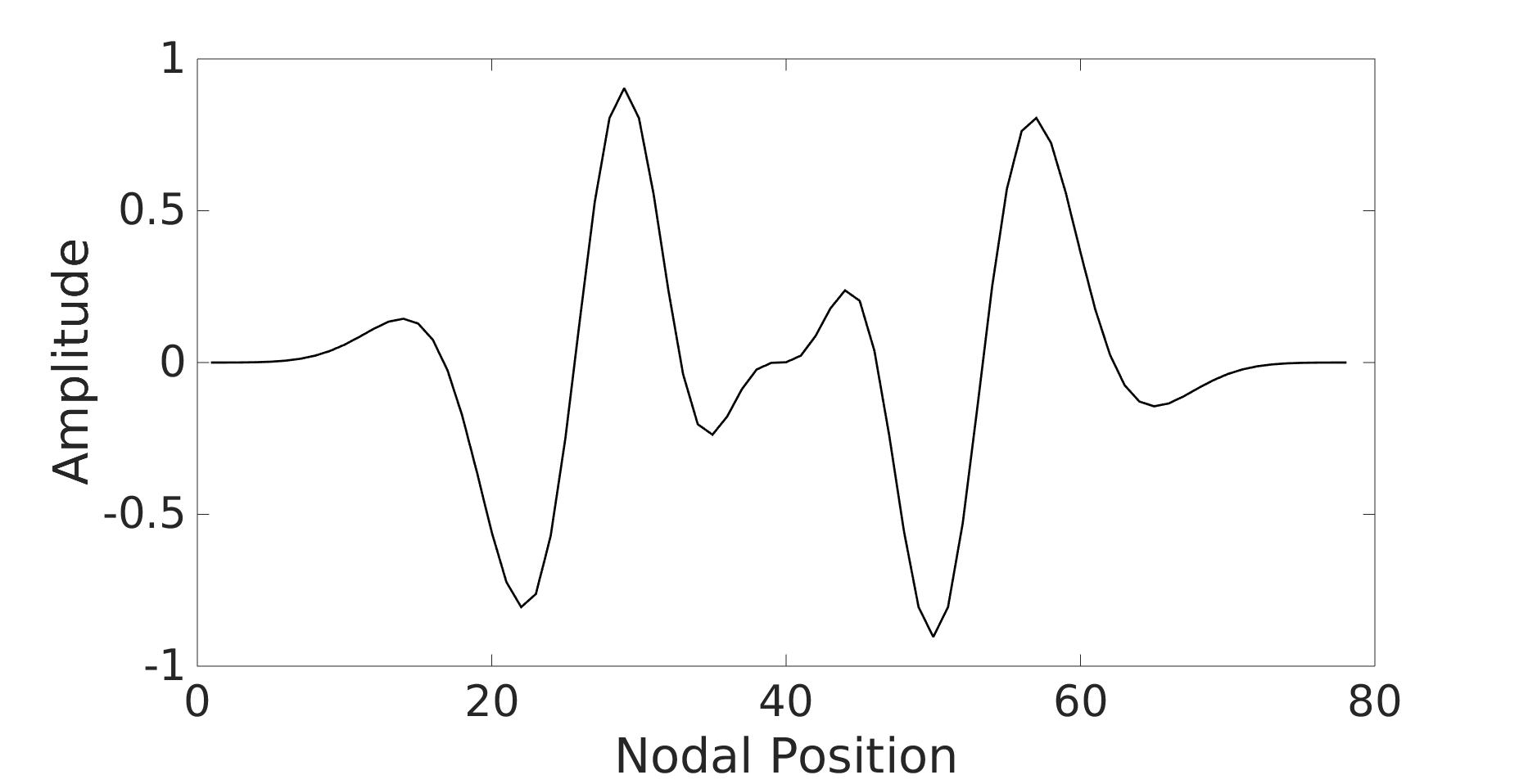}&
		\includegraphics[width=0.5\linewidth]{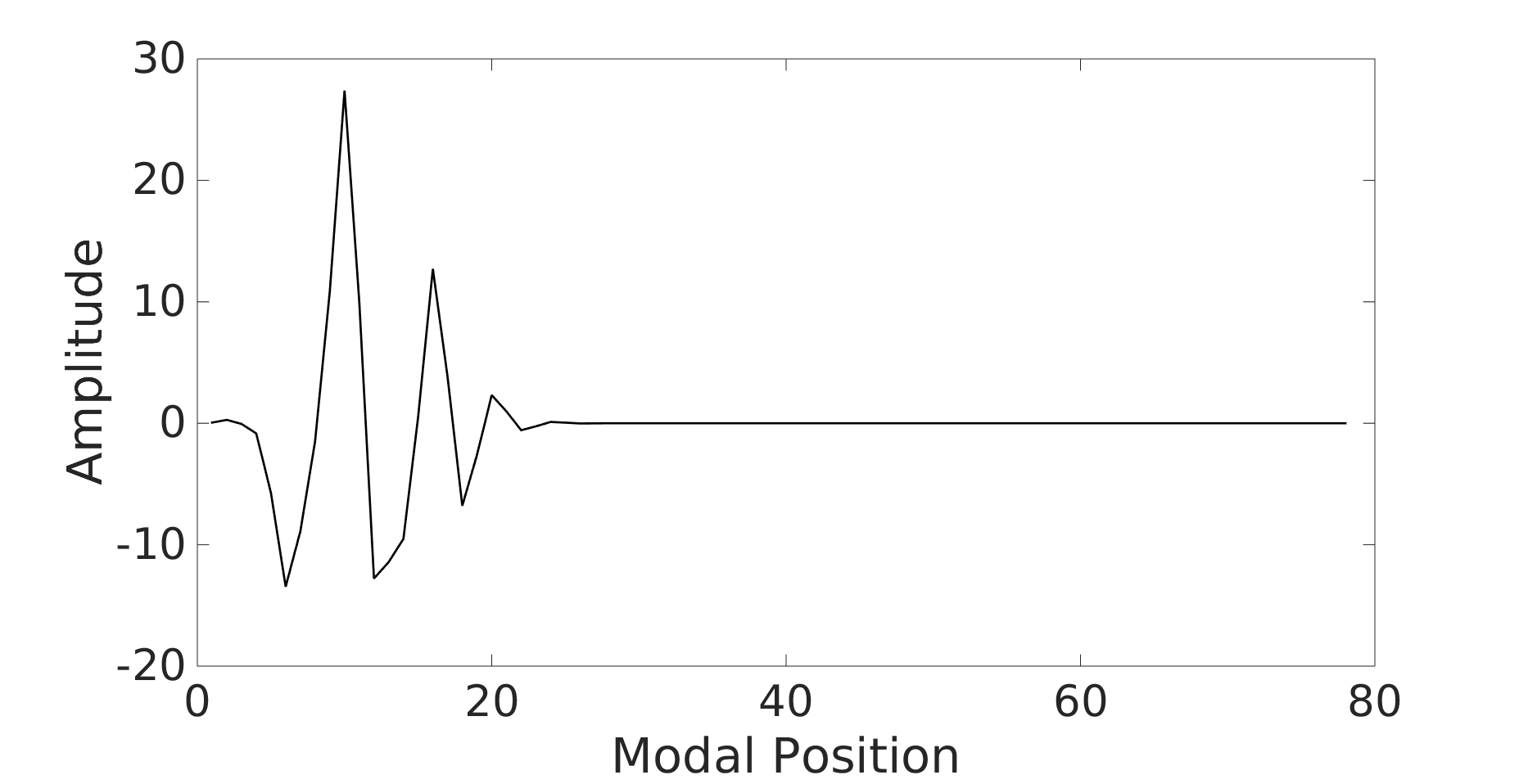} 
\end{tabular}
\end{center}
\caption{The numerical representation of the initial condition, (\ref{CI}), represented in (Left) the nodal--domain, $u^0_N$,
and (Right) the modal--domain, $\hat{u}^0_n$. In both cases we use
 $N=80$.
	 }
	\label{F_CInm}
	\end{figure}
	
\begin{figure}[h]
	\centering
		\includegraphics[scale=0.16]{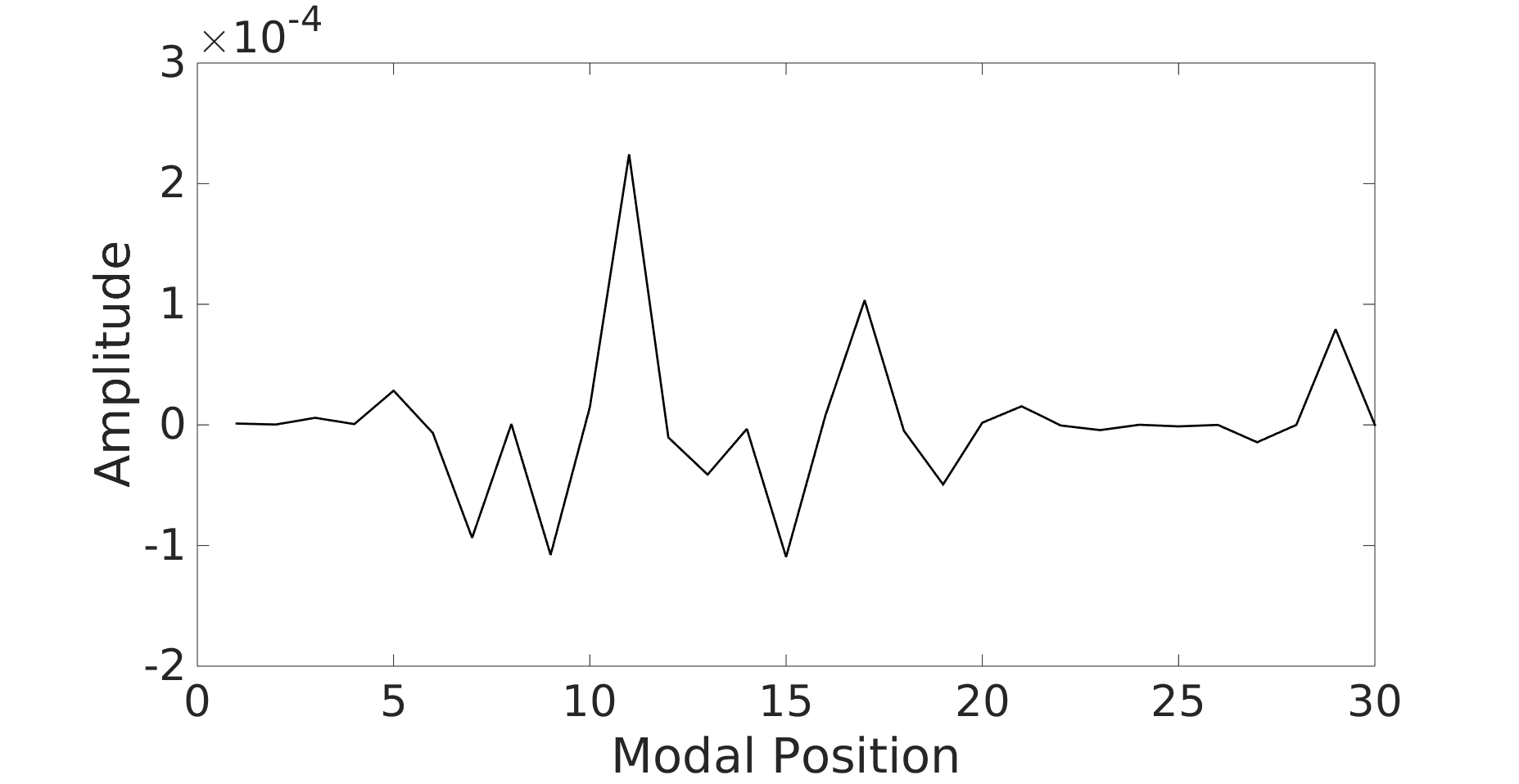}
		\includegraphics[scale=0.16]{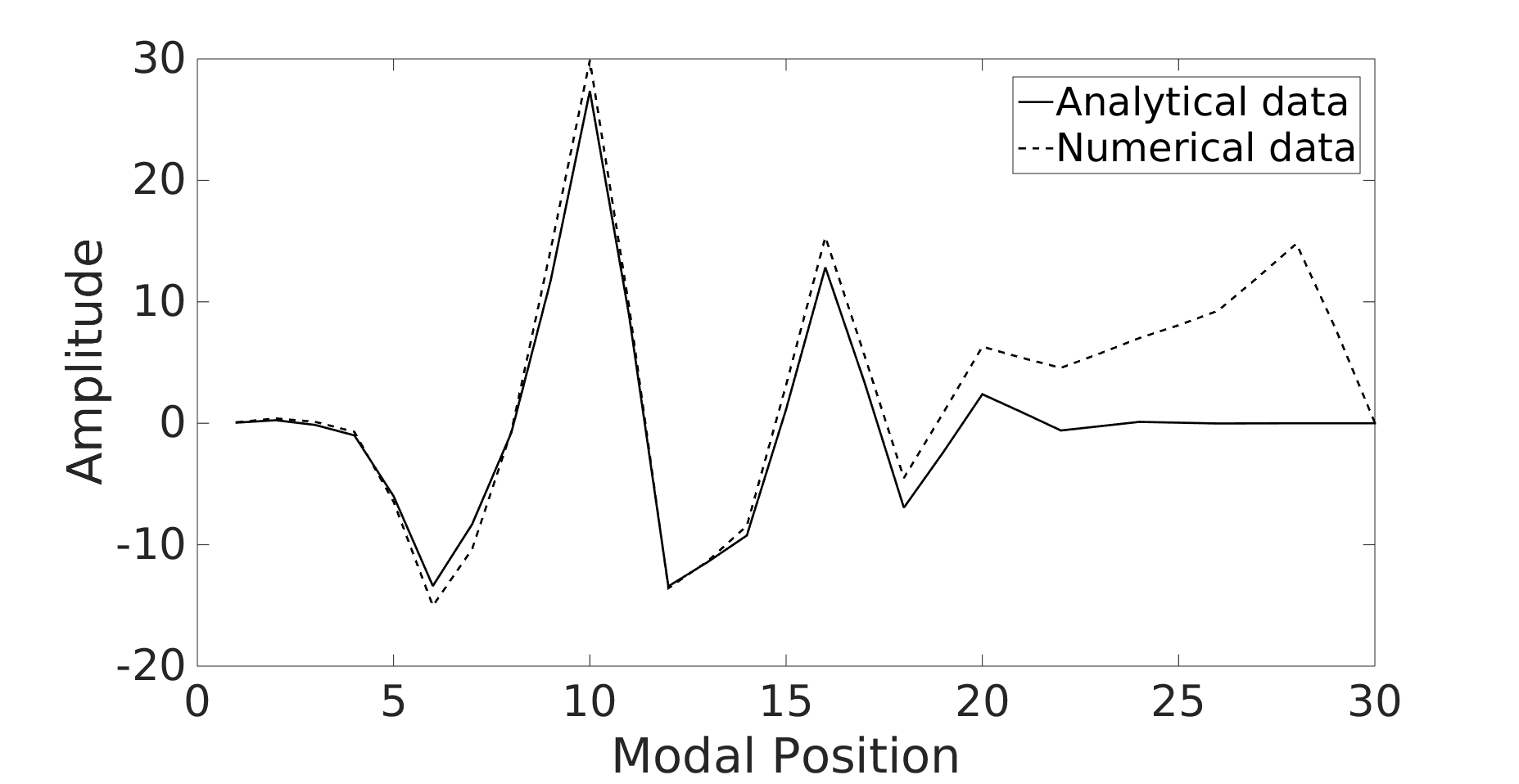} 		
		\includegraphics[scale=0.16]{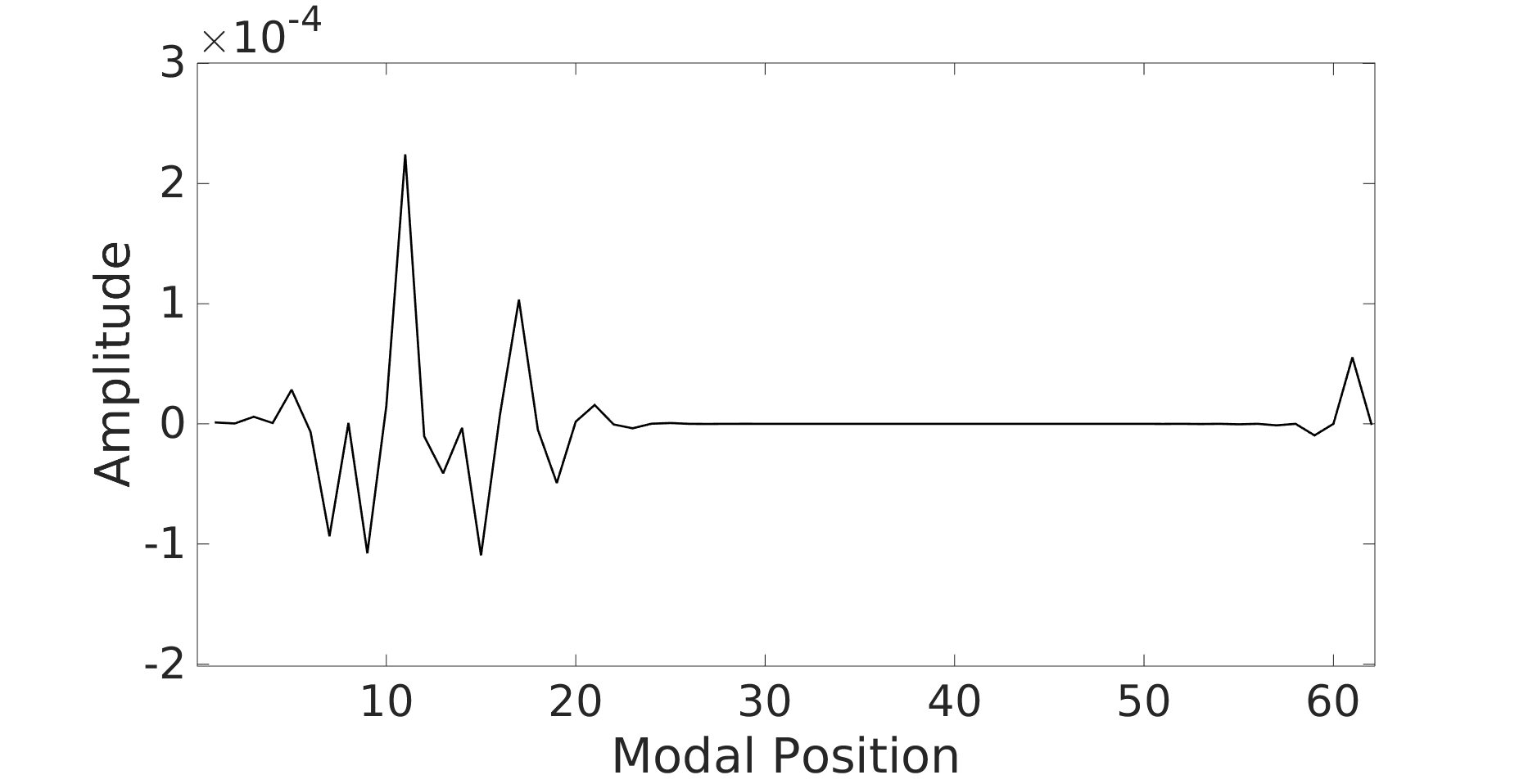} 
		\includegraphics[scale=0.16]{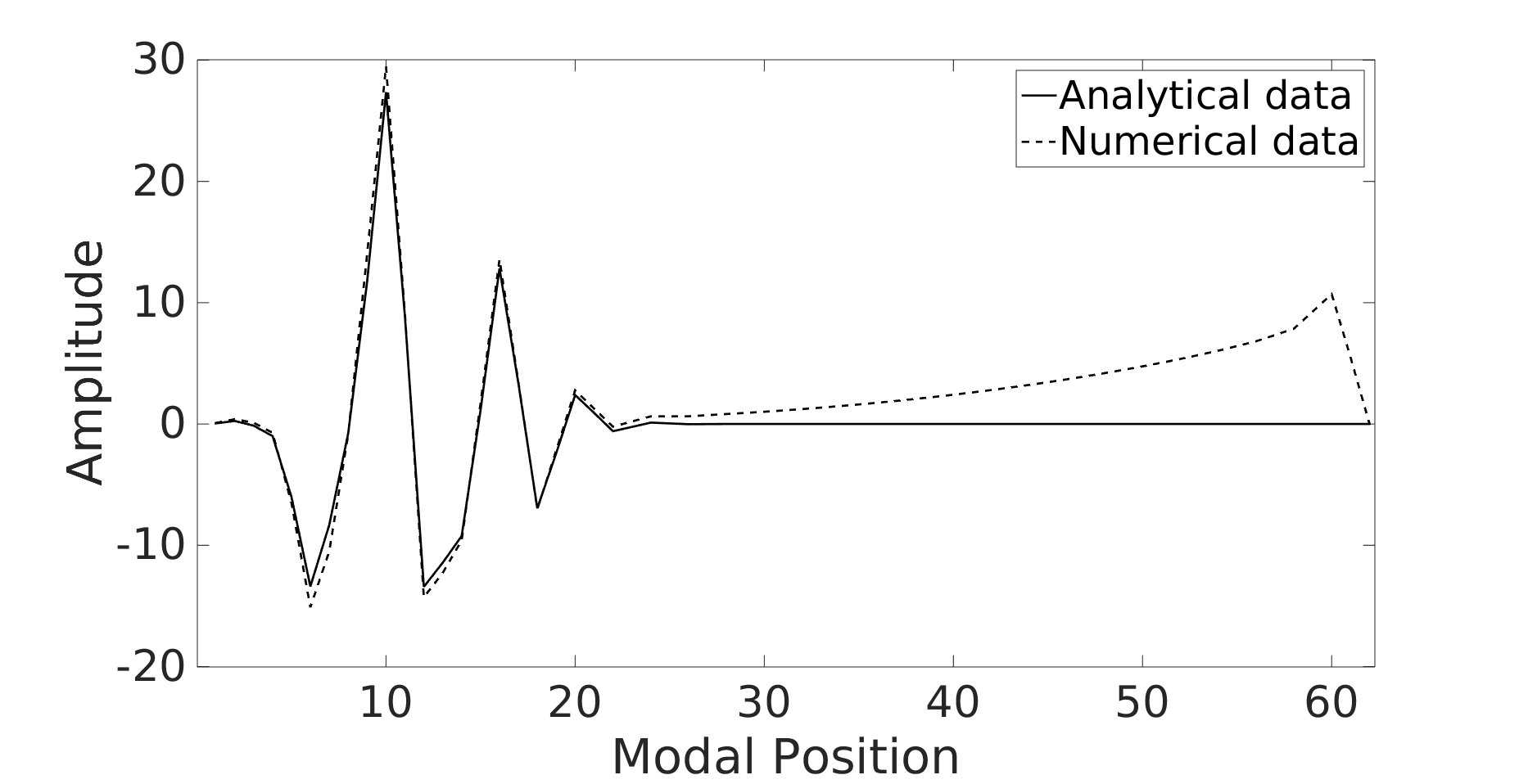} 		
		\includegraphics[scale=0.16]{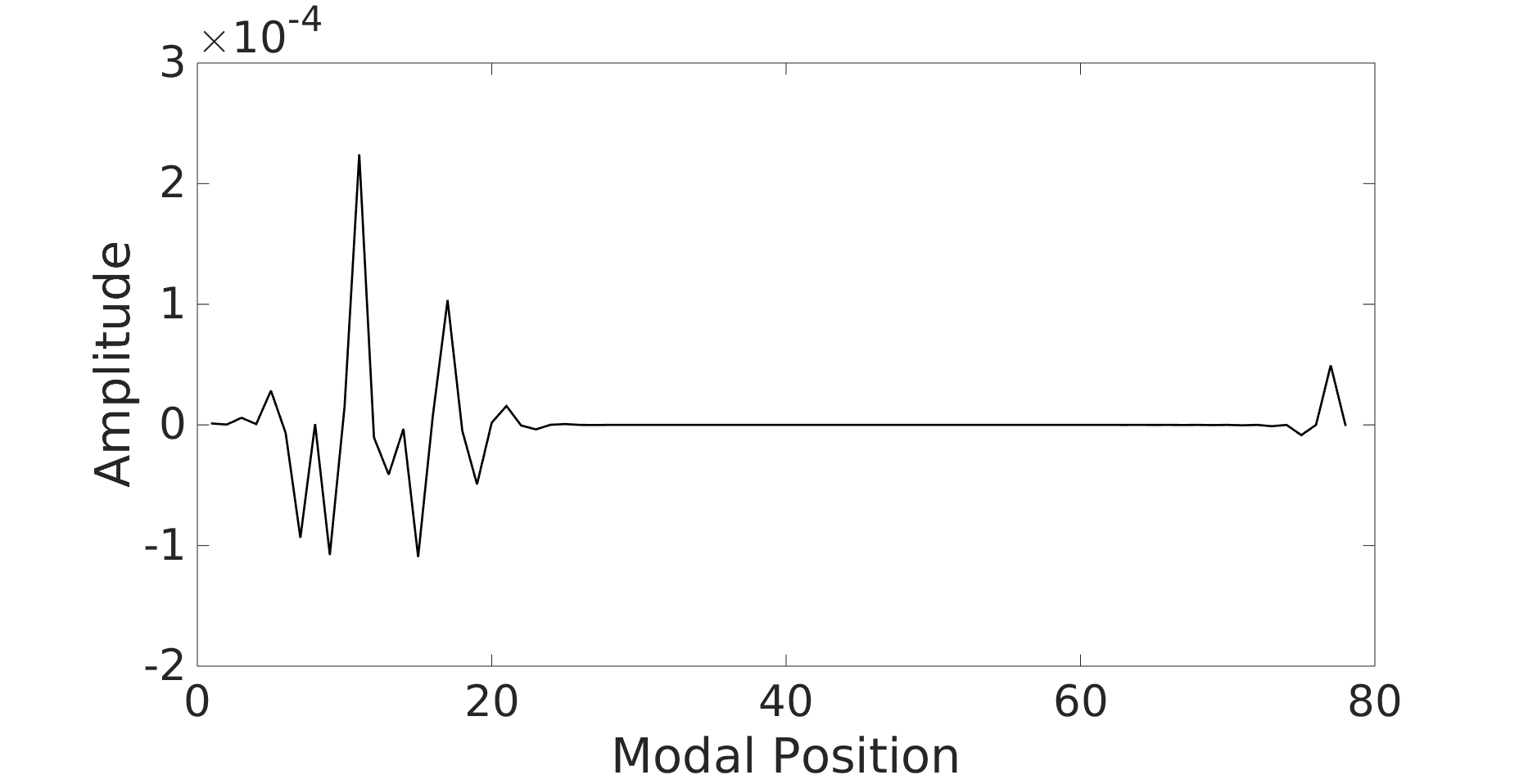} 
		\includegraphics[scale=0.16]{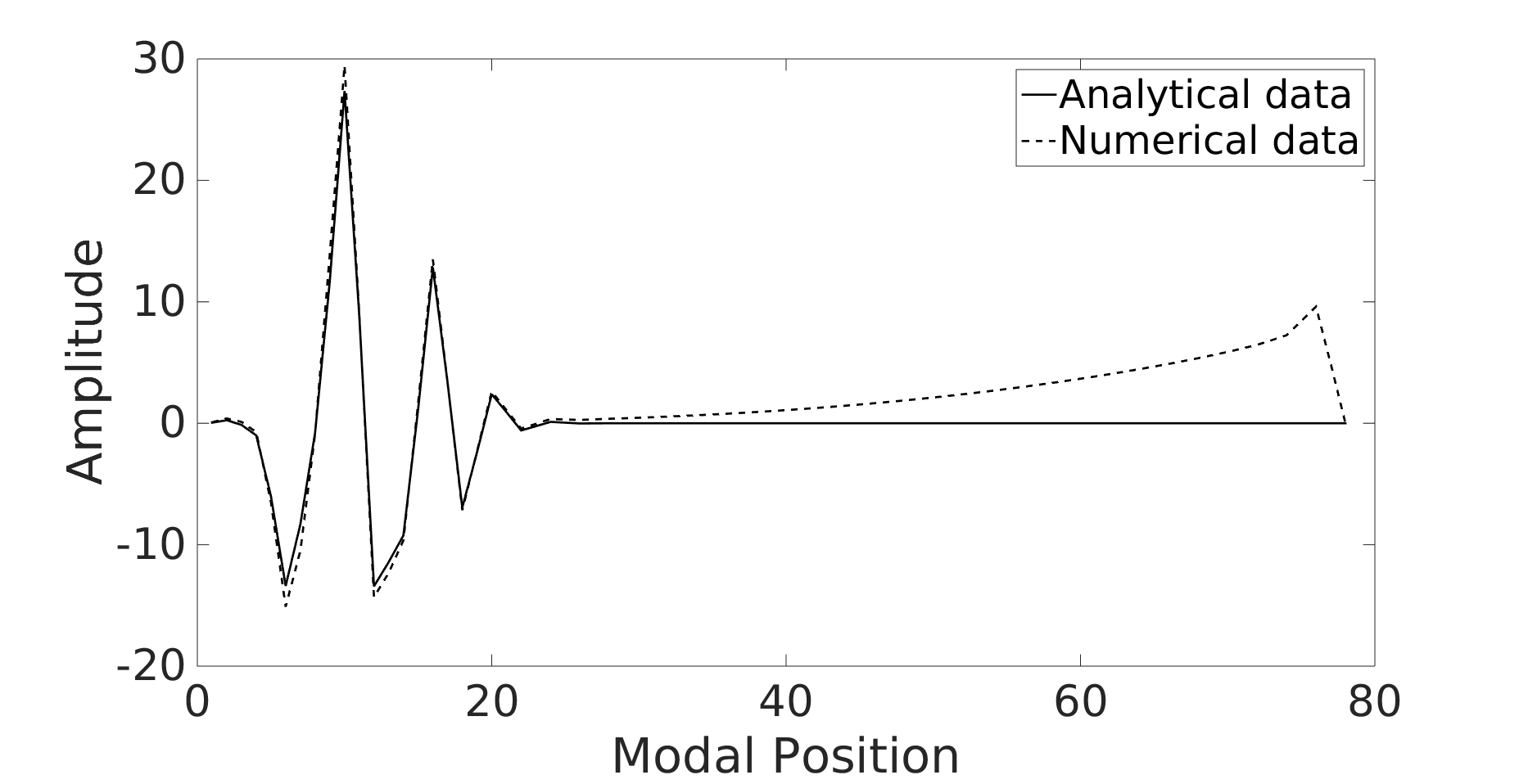} 
	\caption{In all plots, $\alpha=1$, $\beta=0.3$ and $\Delta t=10^{-5}$ seconds. Here, we check the  temporal iteration  $k=40$.
	The number of polynomial terms used are (Top row)  $N=32$, (Center row)  $N=64$ and (Bottom row)  $N=80$. 
	(Left column) the modal representation of the projected source term  $\mathbf{CF}^{k+\frac{1}{2}}$ . 
	(Right column)  the numerical solution $\hat{u}^{k+1}_n$ and the analytical solution (\ref{SCI}) in the $\phi$--space.
 }\label{Signals}
	\end{figure}
	
	Considering  the scheme developed in subsection \ref{subsection.scheme}, it is important to recall that our approach constitutes a global 
	method for the space discretization and, therefore, the computed quantities are  $\hat{u}^{k}_n$, and not the spatial values $u_N^k$.
 	Thus, the information is obtained from the modal basis ($\phi$--basis), it means that all the modes of the $\phi$--domain influence in  each 
	spatial node defined in the mesh. Therefore, it turns out interesting to observe which form takes the signal represented in 
	the modal basis  ($\phi$--basis). More precisely, we focus the analysis on both, the initial condition and the source term studying the
	main features of their modal representation. First, we take the initial condition (\ref{CI}). Note that Figure \ref{F_CInm} (Left) displays 
	the snapshots of the signal into the nodal space, $u^0_N$, whereas  Figure \ref{F_CInm} (Right) represents the transformed signal into 
	the $\phi$-domain, $\hat{u}^0_n $. The most interesting feature appears when on the right--hand side of Figure  \ref{F_CInm}, implying 
	that the main information of the signal is contained in $\phi_i\leq 22$ modes, which means that the initial condition $u_N^0$ can be constructed with  
	no more than $22$ polynomial terms $\phi_i$. Again, if we observe Figure \ref{ConvDtDx} (Right) and the values of Table \ref{TABLAEST2}, 
	it emerges a strong correlation between the spatial convergence and the modal representation of Figure \ref{F_CInm} (Right) already commented. 

	Otherwise, the source term presents more complications  making necessary to refine the analysis. In contrast to the initial condition, the source 
	term (\ref{CI2}) does not satisfy the boundary conditions stated onto the problem \eqref{sys.kdvb.linear}, since it is only defined at the  interval $(-1,1)$.  
	We highlight this fact because by construction, the $\phi$--domain transformation between the source term $f^{k+\frac{1}{2}}_N$ and the 
	modal source $\hat{f}^{k+\frac{1}{2}}_n$ only involves the homogeneous Dirichlet boundary conditions and not the Neumann conditions.
	 Indeed, contrary to the spectrum of the initial condition, for the modal signal $\hat{f}^{k+\frac{1}{2}}_n$, i.e., $\mathbf{F}^{k+\frac{1}{2}}$,  
	 the shape of their spectra always increases when $N$ is varied, suggesting that they require of infinite modes in order to be fully characterized. 
	Moreover, from (\ref{matrixrepre})--(\ref{matrixB}), we observe that the modal vector $\mathbf{F}^{k+\frac{1}{2}}$ is multiplied by the mass matrix, $M$,
	before it is introduced into the implementation scheme. Thus, the source term is transformed into a projected source term, namely, 
	$\mathbf{CF}^{k+\frac{1}{2}}$. It is worth mentioning that the transformation of $\mathbf{F}^{k+\frac{1}{2}}$ into the projected source 
	term $\mathbf{CF}^{k+\frac{1}{2}}$ provides similar spectra than Figure \ref{F_CInm} (Right) but, in that case, they also contain an artifact that 
	appears at high modes always close to $N$, independently of its value. 

	To illustrate this point, we present some results in Figure \ref{Signals}.  All the plots are obtained through simulations with 
	parameters $\alpha=1$, $\beta=0.3$ and $\Delta t=10^{-5}$ seconds. Furthermore, we represent  the data at the temporal iteration $k=40$. 
	The results represented in each row  are obtained with $N=32$, $N=64$ and $N=80$ polynomial terms, respectively. 
	Respect to the left column, this displays the results of three different modal spectra of $\mathbf{CF}^{k+\frac{1}{2}}$, whose  
	modal distribution  is concentrated in the same range of polynomial terms, $\phi_i\leq 22$ modes. However, we clearly observe an artifact 
	that always appears at the final modes of the plots, no matter the value of $N$ is employed. It is important to inform that this error has been 
	deeply analyzed by  numerical simulations obtaining the following conclusions. In addition, the case where the artifact is smaller is observed for 
	$\alpha=1,\,\beta=0$, independent of $\Delta t$,\, $N$, meanwhile,  for different values of $\beta$ considerably worsen the results. Moreover, 
	the artifact behaves  as expected in the previous theory since, for either high values of $N$ or small temporal steps, it tends to vanish. 
	On the other hand, 	the right column shows the outputs $\hat{u}^{k+1}_n$ ( that is, $\mathbf{U}^{k+1}$) and their corresponding analytical 
	values, (\ref{SCI}). In this one the analytical solutions are illustrated with solid lines, showing similar spectra than the initial data given in 
	Figure  \ref{F_CInm} (Right). In all the cases, the signals are represented with no more than $22$ polynomial terms independently of the 
	$N$ value and note that, for the rest of  the temporal iterations $k$, the solution of the problem (\ref{SCI}) may change the shape of the signal 
	depending on the $k$ value, but always preserving the limit $\phi_i\leq 22$ polynomial terms previously noticed. Concerning the signals of 
	$\mathbf{U}^{k+1}$, which are illustrated with dashed lines, these always differ from the analytical results in the high modal range presenting a 
	strong correlation with the signals of   $\mathbf{CF}^{k+\frac{1}{2}}$ depicted in Figure \ref{Signals} (Left column). 
	We observe that the artifact generated in the projected source term ($\mathbf{CF}^{k+\frac{1}{2}}$) is propagated into the numerical solution 
	$\mathbf{U}^{k+1}$ introducing numerical errors at high modes that are globally acquired in $u^{k+1}_N$.
\subsection{Dispersion and diffusion parameters calibration}
	Once understood the behavoir of the numerical approximation that has developed here, 
	we are able to go one step forward  and observe the influence of the dispersion and diffusion  parameters in their whole ranges. 
	As mentioned, the KdVB equation is considered to investigate the impact of bottom configurations on the free surface waves and 
	describe a wide variety of  phenomena arise in plasma physics, among others. Motivated by those applications and using as starting point the references 
	\cite{gao2015variety, 2019Liumulti, 2019Gao, jehan2011planar, 2014Hannonlinear,hussain2011korteweg}, in this subsection we develop three 
	parametric configurations among the coefficients $\alpha$ and $\beta$. Although several constant physics have been simplified in our analysis, all 
	profiles below are consistent with the previous sections and the references above mentioned. 
	
	Henceforth, all color graphics display error estimates among the analytical and numerical solution for different parameter configurations. 
	In addition, those errors are depicted in decibels (dBs)(i.e., $20\log_{10}\epsilon$), where the color white represents regions with low--error 
	values whereas dark color shows high numerical errors. In fact, errors around $-85$ dBs mean that $\epsilon\sim 5\cdot 10^{-5}$, 
	whereas errors of $-20$ dBs mean that $\epsilon\sim 10^{-1}$. Finally, all experiments  have been carried out by considering  $N=32$ nodes. 
	\begin{figure}[ht!]
		\begin{center}
		\begin{tabular}{c}
			\includegraphics[width=0.65\linewidth]{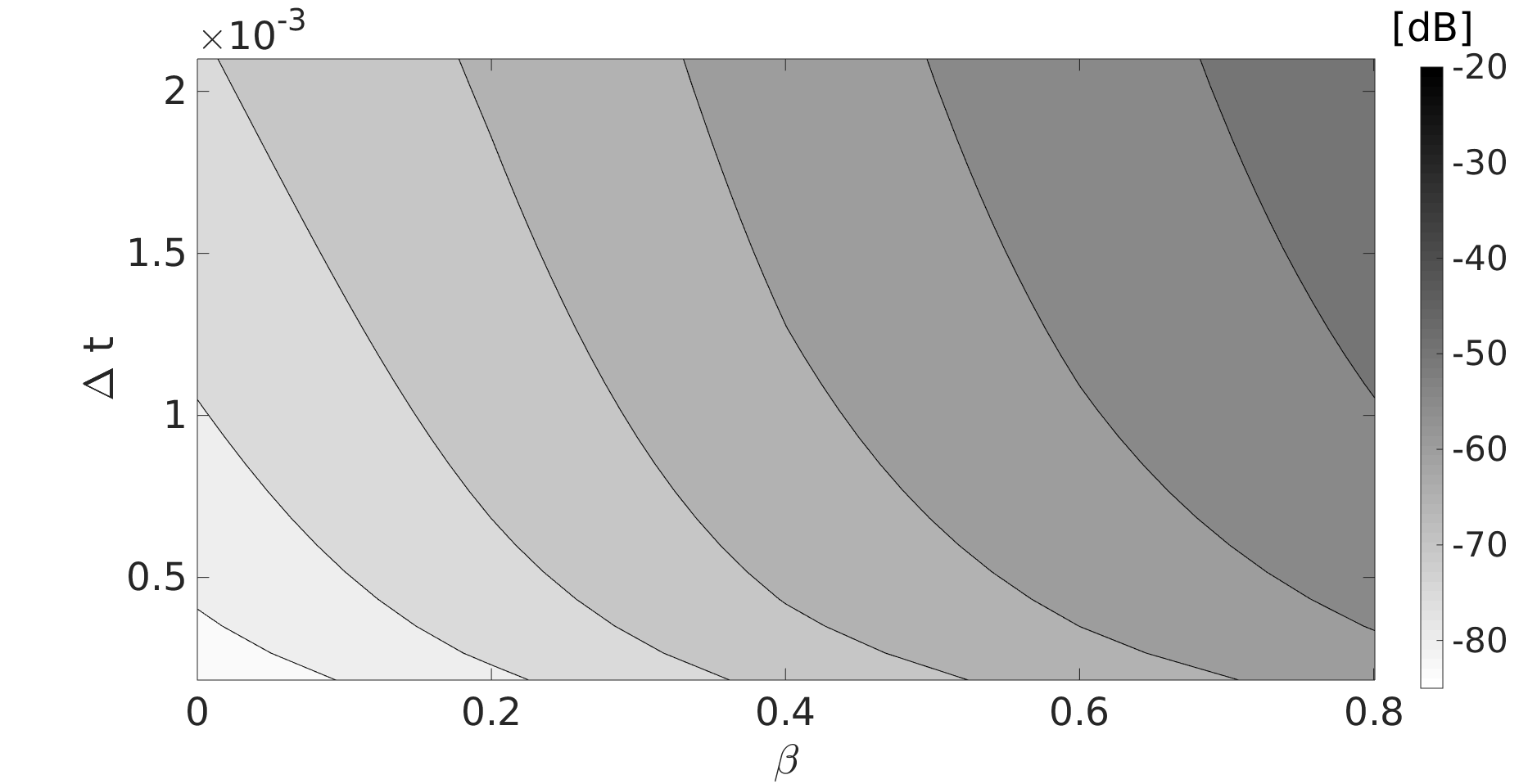} 
		\end{tabular}
		\end{center}
		\caption{A color graph where the $x$-axis is the $\beta$
		parameter and the $y$-axis represents the different temporal steps $\Delta t$. Color white denotes low errors 
		meanwhile dark regions mean high errors.}
		\label{2dExperiments}
	\end{figure}

	\textit{First configuration} ($\alpha=1$ and $\beta\in [0,0.8]$).  In this case we extend the analysis associated to the temporal convergence 
	by considering  a massive experiment where the accuracy \eqref{error} is again measured for time steps $\Delta t$ in the range 
	$\{(i+1)10^{-4}: i=1,\dots, 20\}$. Moreover, the parameter $\beta$ is defined in the range $\{(i-1)4\cdot 10^{-2}: i=1,\dots, 20\}$.
	Lastly, we fix the simulation time to $T=2$ seconds. Thus, $20\times 20$  ($\beta\times\Delta t$)  simulations are carried out when the 
	dispersion coefficient is constant, namely, $\alpha=1$. Figure \ref{2dExperiments} shows the results obtained throughout the massive 
	numerical experiment described above. 
	
	In general terms, the results of the experiment seem reasonable and the spatial--temporal convergence 
	of our approach is easily observed in a wide range between $\beta$ and $\Delta t$. Note that there is a clear dependence in the errors from 
	the dissipation coefficient $\beta$. Indeed, as observed in Theorem \ref{teo.error}, the inclusion of a second order derivative in the KdV 
	equation leads to a suboptimal convergence for either the temporal or spatial discretization. Besides, with this particular configuration 
	among the dispersion and diffusion parameters, note that Figure \ref{2dExperiments} also exhibits a numerical perspective to the 
	theoretical assumption H\ref{h.th.error} given in \eqref{hypo.H.teoerror}. 
	
 	\textit{Second configuration} ($\alpha\in (0.2,1.15]$, $\beta\in [0.2,0.65)$). We present another massive numerical experiment of 
	$20\times 20$ simulations in Figure \ref{alfaVsBeta}. To be more precise, the dispersion parameter $\alpha$ belongs to the 
	set $\{0.2+(i-1)5\cdot 10^{-3}: i=1,\dots, 20\}$  and the dissipation parameter  $\beta$  belongs to the set 
	$\{(i-1)3.25\cdot 10^{-2}: i=1,\dots, 20\}$. Taking into account the above sets, we depicted two gray--scaled graphs where the color 
	again represents the error ( see (\ref{error}) with $p=2$) associated to the  specific simulation computes.
 	Figure \ref{alfaVsBeta} (Left) shows the results for $\Delta t=10^{-3}$ seconds and   Figure \ref{alfaVsBeta} (Right) for $\Delta t=10^{-4}$ seconds. 
	
	Additionally, we have selected $\alpha\in (0.2,1.15]$ and $\beta\in [0,0.65)$  because we pretend to guarantee  results with accuracy inside 
	of the range of $[-85,-20]$ dBs.  Under these intervals, we present the error values in two different cases of $\Delta t$, as mentioned
	$\Delta t=10^{-3}$  and $\Delta t=10^{-4}$ seconds. Note that, in both graphs, the shape in the color variation is preserved whereas the amount 
	of error in Figure \ref{alfaVsBeta} (Left) is constantly increased (i.e., homogeneously darker)  than for the  results of
	Figure \ref{alfaVsBeta} (Right) which errors are obtained with a smaller temporal discretization. 
	We  also highlight that,  independently of the temporal step employed, values  of $\alpha< 0.2$ provide critical errors of $\epsilon$, which are out
	 of the accuracy bounds previously established.  Moreover, we also restrict the range of $\beta$ to $0.65$ because the errors 
	become critical, i.e.,  up to $-20$ dBs, if we consider simulations with a parameter $\alpha<1/3$. To finish this case, it is worth pointing that these 
	results are in full concordance with the established relation H\ref{h.th.error}  among the dispersion and diffusion parameters.
\begin{figure}[h]
	\begin{center}
	\begin{tabular}{cc}
	\includegraphics[width=0.5\linewidth]{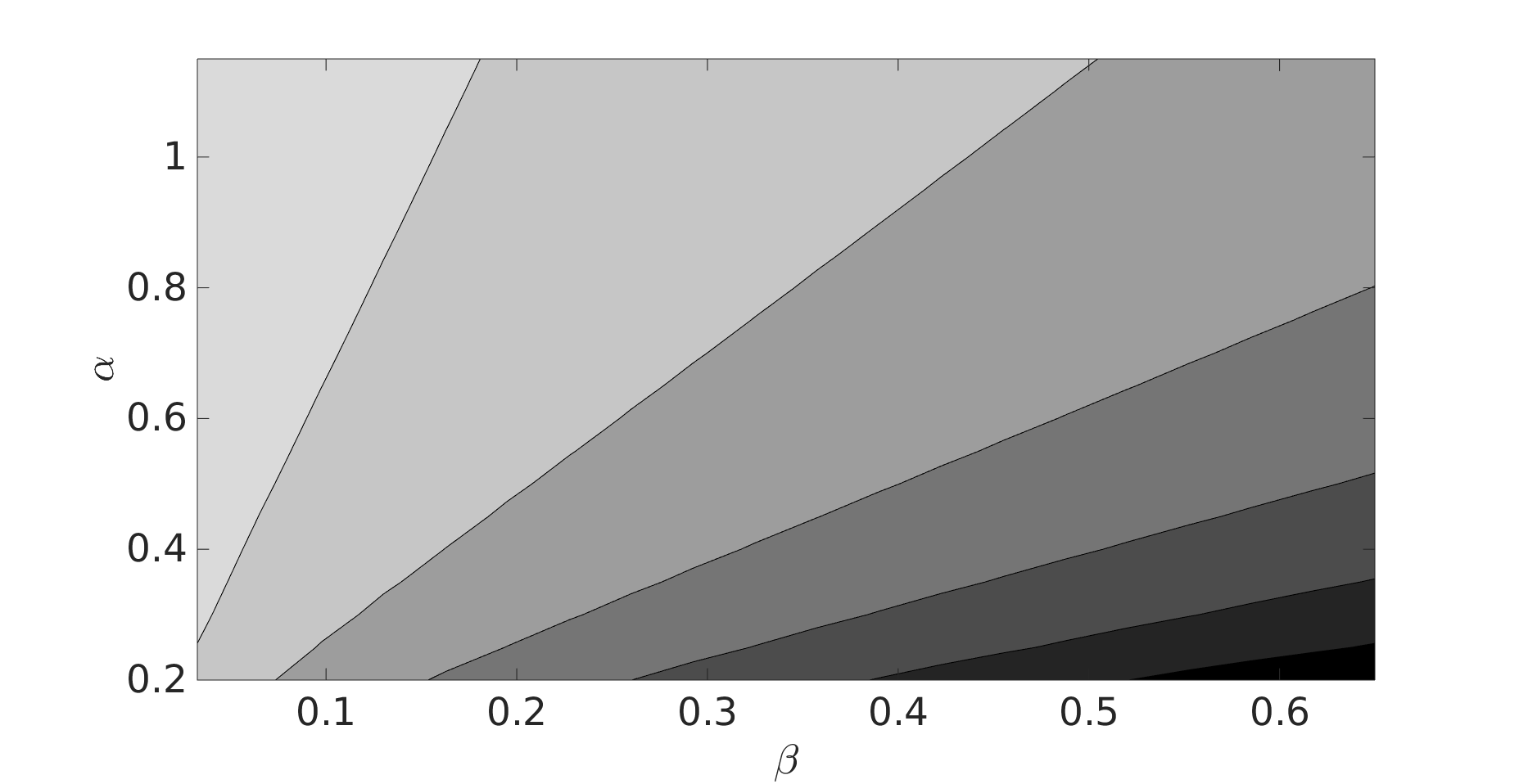} &
	\includegraphics[width=0.5\linewidth]{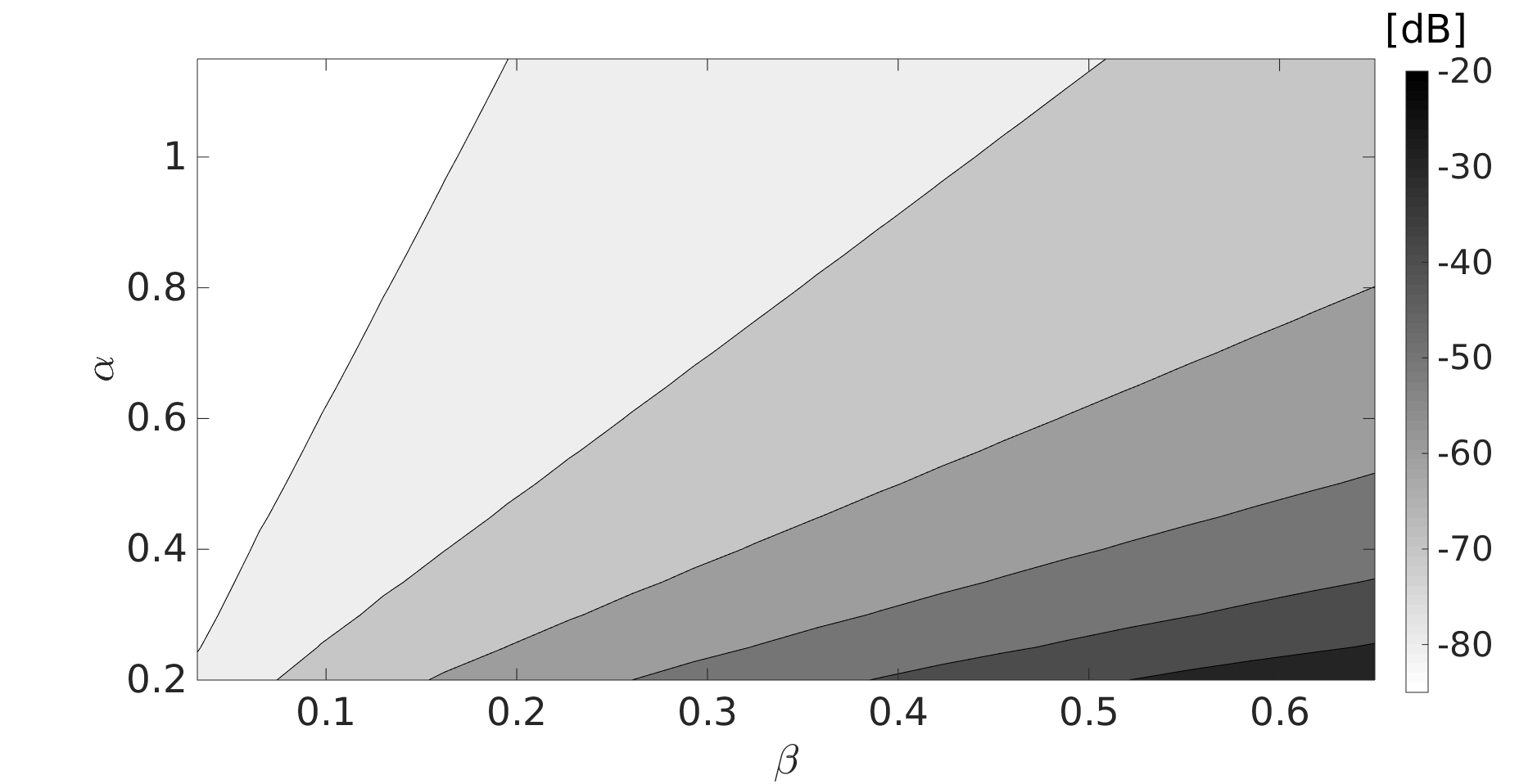} 
	\end{tabular}
	\end{center}
	\caption{Color graphs where the $x$--axis is the $\beta$ coefficient and the $y$--axis represents the different coefficients $\alpha$. 
	Color white denotes low errors whereas dark regions mean high errors $\epsilon$. Two temporal steps are considered: 
	(Left) Fixing $\Delta t=10^{-3}$ seconds and (Right) fixing $\Delta t=10^{-4}$ seconds.}
	\label{alfaVsBeta}
	\end{figure}

	\textit{Third configuration} ($\alpha$ and $\beta$ with temporal--dependence). To conclude, we develop several experiments considering  
	time--dependent parameters. As mentioned at the beginning of this section, the chosen temporal profiles are based upon the papers 
	found in the literature. From \cite{2019Liumulti} and \cite{jehan2011planar} and by simplicity, some physical data have been modified.
	More precisely, we analyze the following two cases:
	\begin{itemize}
		\item \textit{Case $1$.}
		$\alpha(t)=5\cos\left(\displaystyle\frac{\pi t}{4}\right)~,~~~~~~~~~~~~~
		\beta(t)=\displaystyle\frac{1}{\cos\left(\displaystyle\frac{\pi t}{4}\right)}~~,~\forall t\in[0~1].$
		\item \textit{Case $2$.}
		$\alpha(t)=(t+1)^2~,~~~~~~~~~~~~~~~~\beta(t)=\displaystyle\frac{0.5}{t+1}~~~~~~~~~,~\forall t\in[0~1].$
	\end{itemize} 
	In both cases, we introduce these parameters into the implementation scheme (\ref{matrixrepre}) and also considering the initial condition 
	 (\ref{CI})  and the source term (\ref{CI2}). Recall that the $\alpha$ and $\beta$ profiles defined above explicitly appear in the source term (\ref{CI2}),
	 and whose solution (\ref{SCI}) is free of the dispersion and diffusion parameters. As mentioned, these specific conditions
	 make possible to define a benchmark (\ref{error}), which is either absolute or representative measure to fit the accuracy of these time--variation  
	 parameters into the numerical scheme (\ref{matrixrepre}). Therefore, we compute the error, $\epsilon$, in each case by using three different 
	 temporal samplings, $\Delta t=\{10^{-2},10^{-3},10^{-4}\}$ seconds.  Moreover, we define two measures  that give proper error bounds useful to 
	 easily calibrate the accuracy of the scheme when those time--dependent parameters are used. Therefore, we define both, the upper and the lower 
	 bound by considering the theoretical assumption  H\ref{h.th.error} (see theorem \ref{teo.error}) as well as the numerical results from 
	 Figure \ref{alfaVsBeta}. Briefly speaking,  the main idea consists in providing  a reliable error interval  that permits to establish a simple 
	 calibration of the method through simulations with constant  parameters. To do that,  we define the upper error, denoted by $\epsilon_{max}$,
	 and the lower error, denoted by $\epsilon_{min}$, by considering the constant pairs $(\bar\alpha,\bar\beta)$ that make maximum (resp.  minimum) 
	the error $\epsilon$. Note that these pairs $(\bar\alpha,\bar\beta)$ would strongly depend on the profile defined and their values would be different at 
	each case treated.  For example, if we consider the Case 1, the maximum error, $\epsilon_{max}$, is obtained when 
	$\bar\alpha=5/\sqrt{2}$ and $\bar\beta=\sqrt{2}$, whereas the minimum error, $\epsilon_{min}$ occurs for 
	$\bar\alpha=5$ and $\bar\beta=1$. For the Case 2,  $\epsilon_{max}$ is obtained with a simulation with  $\bar\alpha=1$ and 
	$\bar\beta=0.5$, whereas $\epsilon_{min}$ is achieved when  $\bar\alpha=4$ and $\bar\beta=0.25$.

\begin{table*}[h]
  \begin{center}
     \caption{Analysis of the error, $\epsilon$ and the upper and lower-bound error, $\epsilon_{max}$ and $\epsilon_{min}$ respectively. 
The three quantities are measured considering $1$ second of time simulation and $N=32$ nodes.}
    \begin{tabular}{|c||c|c|c||c||c|c|c|}
	\hline
      Case 1  & $\Delta t=10^{-4}$ & $\Delta t=10^{-3}$ & $\Delta t=10^{-2}$ & Case 2 & $\Delta t=10^{-4}$ & $\Delta t=10^{-3}$ & $\Delta t=10^{-2}$ \\
	\hline\hline
       $\epsilon_{max}$    & 0.00070345 & 0.00220118  & 0.00744720 & $\epsilon_{max}$ & 0.00027205 & 0.00087034 & 0.00322214  \\
	\hline
       $\epsilon_{min}$ & 0.00018275  & 0.00059578 & 0.00251925 & $\epsilon_{min}$ &  0.00005478 & 0.00019861 & 0.00134123  \\
      \hline
       $\epsilon$   & 0.00029501 & 0.00095356 & 0.00341955 & $\epsilon$ & 0.00009874 & 0.00032937 & 0.00181271  \\
	\hline
    \end{tabular}\label{TABLAEST3}
  \end{center}
\end{table*}
	The $\epsilon$ results of both cases and their corresponding error bounds, $\epsilon_{max}$ and $\epsilon_{min}$, are recorded in Table 
	\ref{TABLAEST3} considering three different temporal steps. It is worth pointing out that the results exhibit in Table \ref{TABLAEST3} are in 
	full concordance with the previous explanations and the error, $\epsilon$, is always within the error interval $[\epsilon_{min}, \epsilon_{max}]$, 
	no matter which temporal step is employed.
 
\section{Conclusions}
	The linear KdVB equation with non--periodic boundary conditions and time--dependent coefficients has been numerically 
	analyzed using the LPG method for the spatial discretization, and a finite difference scheme for the temporal behavior.
	The core of our analysis are new estimates related to the stability and convergence problems for this kind of equations, which now 
	involve non constant coefficients. Specifically, the convergence result proved in Theorem \ref{teo.error}
	shown a trade off between dispersion and diffusion parameters added into the model with a view to establishing  upper estimates in the form 
	$C_{\alpha}(N^{-r}+(\Delta t)^{2})+C_{\beta}(N^{1-r}+(\Delta t)^{2})$, where $C_\alpha\approx (\min\alpha(t))^{-1}$,
	$C_\beta\approx \|\beta\|_{L^\infty([0,T])}$ and $r\geq 2$. Respect to the stability, it can swing sharply if the dispersion 
	coefficient $\alpha(t)$ is small enough, see Theorem \ref{th1.stability}. 

	The computed results of the KdVB equation \eqref{sys.kdvb.linear}  exhibit the high accuracy of the proposed method based 
	in the Euler scheme--LPG approximation.  First, we have define a proper benchmark \eqref{error}, which is based on the data
	\eqref{CI}--\eqref{CI2}, providing evidences of the theoretical results in the temporal and spatial convergence. Moreover, we complement the 
	analysis of the spatial convergence pointing out the relation between  the numerical error and the projected source term, $\mathbf{CF}$.
	Secondly,  we develop a wide numerical analysis of the $\alpha(t)$ and $\beta(t)$  parameters. Finally, we present several experiments that show the 
	best performance ranges and the proper calibration of time--dependent  profiles.
		
	 To the best of our knowledge, our framework constitutes a first approach for studying from a numerical point of view
	 dynamic systems of odd--order dispersive equations with coefficients that can vary in time. 
	 The results obtained in this paper shown the role and trade off of these temporal parameters into the model.

	Finally, there are many ways to extend these ideas to future works. For example, a first future work could consider nonlinearity 
	into the model \eqref{sys.kdvb.linear}  with its respective coefficient, and it is associated to the convective term  
	$\gamma(t)uu_x$, with $\gamma(t)\neq\alpha(t)$,\, $\gamma(t)\neq\beta(t)$. Nevertheless, it worth mentioning that the 
	nonlinear analysis requires additional techniques that are not considered in this work.
	We invite readers to review the references  for more details on the nonlinear case.
	 For this kind of systems, a details comparison of the LPG approach to other methods in terms 
	of accuracy and efficiency would also be of interest. In a more general sense, similar ideas may also be fruitful 
	in considering evolution dynamics described by PDEs including time parametric dependencies located in its coefficients.









\end{document}